\documentclass[11pt]{article}
\usepackage{color}
\usepackage{graphicx}
\usepackage{subfigure}
\usepackage{epsfig}
\usepackage{cite}
\usepackage{amsmath, amssymb}
\usepackage{fancybox}
\usepackage{listings}
\usepackage{verbatim}
\usepackage{url}
\usepackage{esint}
\usepackage{fancyhdr}
\usepackage{booktabs}
\usepackage{caption}
\usepackage{dsfont}
\captionsetup[table]{position=top}

\input{epsf.sty}
\setlength{\textheight}{8.8in}
\setlength{\textwidth}{6.1in}
\setlength{\evensidemargin}{0.1in}
\setlength{\oddsidemargin}{0.1in}
\setlength{\headsep}{10pt}
\setlength{\topmargin}{0.0in}
\usepackage{mathrsfs,amsthm,amsmath}
\oddsidemargin=0cm \textwidth=16.5cm \textheight=24cm
\topmargin=-1.5cm

\newtheorem{theo}{Theorem}[section]
\newtheorem{prop}{Proposition}[section]
\newtheorem{rem}{Remark}[section]

\newtheorem{cor}{Corollary}[section]

\newtheorem{ex}{Example}[section]
%%%%%%%%%%%%%%%%%%%%%end macros%%%%%%%%%%%%%%%%%%%%%%%%

\begin{document}
\title{Piecewise linear processes with 
Poisson-modulated exponential switching times\footnote{This is the peer reviewed version of the paper, which has been published
in final form at https://doi.org/10.1002/mma.5683. This article can be used for non-commercial purposes in accordance with Wiley Terms and 
Conditions for use of Self-Archived versions.}}
\author{Antonio Di Crescenzo\thanks{Dipartimento di Matematica,
Universit\`a degli Studi di Salerno, 84084 Fisciano (SA), Italy. e-mail: \texttt{adicrescenzo@unisa.it}}
\and
Barbara Martinucci\thanks{Dipartimento di Matematica,
Universit\`{a} degli Studi di Salerno, 84084 Fisciano, SA, Italy. e-mail:
\texttt{bmartinucci@unisa.it}}\and Nikita
Ratanov\thanks{Facultad de Econom\'{\i}a,
Universidad del Rosario, Bogot\'a, Colombia. e-mail:
\texttt{nikita.ratanov@urosario.edu.co} (corresponding author)}
}
\date{}
\maketitle
\begin{abstract}{
We consider the jump telegraph process when
switching intensities depend on external shocks also accompanying
with jumps. The incomplete financial market model based on this process is studied. 
The Esscher transform, which changes only unobservable parameters, is considered in detail.
The financial market model based on this transform 
 can price switching risks as well as jump risks of the model. 
}

\noindent\emph{Keywords:} Poisson  process, Poisson-modulated exponential distribution, telegraph process, piecewise linear process, renewal process, martingale, risk neutral measure.\\
\end{abstract}

\section{Introduction and problem setting}
\setcounter{equation}{0}

The piecewise linear processes have a long history and still receiving 
attention in various aspects. This family of processes includes so-called 
telegraph process which presumes alternating velocities with exponentially 
 distributed   time intervals between switchings. The number of switchings 
 in this model is counted by homogeneous Poisson process.
 This theory has been developing 
since the seminal paper by Taylor \cite{Taylor} for almost a century. 
In the 50s, this model has been studied by Goldstein \cite{Gold}
and  Kac \cite{Kac}.  
See the history and the detailed description 
in  the monograph by Kolesnik  and  Ratanov \cite{KR}.

The model which is based on the distribution of the inter-switching times
different from exponential      is much less studied. Some examples could be found 
in \cite{DiC-Barbara-Zacks,DiCR,DiC-Zacks,damped,Weibull-logistic}.
Another generalisation can be constructed as a piecewise linear process 
with arbitrary consecutive 
trends $c_n$ and switching intensities $\lambda_n,\;n\geq0,\;\lambda_n>0,$  
see  \cite{STAPRO90} and \cite{ALEA17}. 

In this paper we study the piecewise linear processes
with exponentially distributed  time intervals between successive tendency switchings, but here we assume 
that the parameter of this distribution depends on  
exogenous shocks (exogenous impacts, external interventions), arriving at a constant rate.  
 This approach reflects the well posed problem of  financial market modelling, 
 when multiple agents are trying to break the trend by interventions, 
 but could affect only the  switching rate.
   
  Let $\lambda_n>0,\;n\geq0,$ be successive intensities of shocks.
  That is, on a complete probability space $(\Omega, \mathcal F, \mathbb{P})$
  we consider the sequence of time intervals $\tau_n,\;\tau_n>0,$ between the consecutive 
  shocks, which are independent and exponentially distributed,
  $\tau_n\sim\mathrm{Exp}(\lambda_n),\;n\geq0.$  
  
   We assume shocks do not come explosively, i. e. the
 process $\tau^{(+,n)}=\tau_0+\ldots+\tau_n$ is a \emph{simple point process}, 
 \[\mathbb P\{\lim_{n\to\infty}\tau^{(+,n)}=\infty\}=1,\] which is equivalent to
\begin{equation}
\label{eq:SPP}
\sum\lambda_n^{-1}=\infty,
\end{equation} 
 see e. g. 
\cite{jacobsen}. This means that there is a finite accumulation of shocks at any finite time interval:
 $\mathbb P\{N(t)<\infty\}=1,\;\forall t>0,$
 where $N=N(t),\;t\geq0,$ is the corresponding counting process,
  $N(t)=\sum_{n=1}^\infty\mathbf 1_{\{\tau^{(+, n-1)}\leq t\}},$ and $\mathbf 1_A$ is the indicator.
 
 If \eqref{eq:SPP} fails, then this process exibits explosive behaviour,
 $\mathbb P\{N(t)<\infty\}<1.$  For example, if $\lambda_n=(n+1)^2,$ then 
 $\mathbb P\{N(t)<\infty\}=-2\sum_{n=1}^\infty(-1)^n\mathrm e^{-n^2t}<1,$ see \cite[Example 6.3.1]{snyder}.

If all $\lambda'$s are equal, $\lambda_n\equiv\lambda,\;n\geq0,$  then $N=N(t)$ 
is the homogeneous Poisson process.  

  Let $\mu_n,\mu_n>0,\;n\geq0.$
  Consider the random variable $T,\;T>0,$ which has the exponential distribution 
  with $N(t)$-modulated parameter, $\mu=\mu_{N(t)}:$
   the survivor function of $T$ is given by
 \begin{equation}
\label{eq:barF}
\overline F_T(t):=\mathbb{P}\{T>t\}=\mathbb{E}\{\mathrm{e}^{-\xi(t)}\},
\end{equation}
where 
\begin{equation}
\label{def:xit}
\xi(t):=\int_0^t\mu_{N(s)}\mathrm{d} s
=\sum_{k=0}^{N(t)-1}\mu_k\tau_k
+\mu_{N(t)}\left(t-\sum_{k=0}^{N(t)-1}\tau_k\right)
=\sum_{k=0}^{N(t)-1}\left(\mu_k-\mu_{N(t)}\right)\tau_k
+\mu_{N(t)}t,\qquad t\geq0,
\end{equation}
is  the accumulated intensity. 
  
We call such a distribution a  \emph{Poisson-modulated exponential distribution}, 
${\rm PoExp}(\vec\lambda, \vec\mu).$ Here $\vec\lambda=\{\lambda_n\}_{n\geq0}$ is the sequence 
of parameters of the underlying process $N$ counting arrivals of shocks and 
$\vec\mu=\{\mu_n\}_{n\geq1}$ are the sequential parameters 
of the main exponential distribution. 

In this paper we study the piecewise linear process which follows two patterns
alternating at random instants
with  Poisson-modulated exponential  distribution  of 
inter-switching times.

To begin with, consider the sequence 
of independent Poisson processes $N=N_m(t),\;m\geq0,$
based on the two alternating sets of parameters 
$\vec\lambda^{(0)}=\{\lambda^{(0)}_n\}_{n\geq0}$ 
and $\vec\lambda^{(1)}=\{\lambda^{(1)}_n\}_{n\geq0},\; \lambda^{(0)}_n, \lambda^{(1)}_n>0,\;n\geq0,$
 \begin{equation}\label{def:Nm}
\mathbb{P}\{N_m(t)=n~|~N_m(s)=n\}=\exp\left(
-\lambda^{(\varepsilon_m)}_n(t-s)
\right),\qquad s<t,\quad n\geq0. 
\end{equation}
Here 
 $\varepsilon_m\in\{0, 1\}$ is a the sequence of alternating  $0$ and $1,$  which
 indicates the current pattern. 
Let these processes be non-explosive, 
satisfying \eqref{eq:SPP}, \[\sum_{n\geq0}{1/\lambda_n^{(i)}}=\infty,\qquad i\in\{0, 1\}.\]%

 Let  $\vec\mu^{(0)}=\{\mu_n^{(0)}\}_{n\geq0}$ and  $\vec\mu^{(1)}=\{\mu_n^{(1)}\}_{n\geq0}$ 
 be  two sequences of positive numbers.
 Consider the sequence 
 of independent random variables  $\{T_m\}_{m\geq1}$ , $T_m\geq0,$ 
 with alternating Poisson-modulated exponential  distributions, 
 $T_m\sim\mathrm{PoExp}(\vec\lambda^{(\varepsilon_m)}, \vec\mu^{(\varepsilon_m)}),\;m\geq1,$
 \[
 \mathbb P\{T_m>t\}=\mathbb E\exp\left(-\xi_m(t)\right),\qquad \xi_m(t)
 =\int_0^t\mu_{N_m(s)}^{(\varepsilon_m)}\mathrm ds.
 \]
 
 Consider the
point process formed by the times,  
$T^{(+,m)}=T_1+\ldots+T_m,\;{m\geq1},$  
when the patterns are switched, $T^{(+,0)}=0.$ 

 Since $N_m$ are non-explosive, $\mathbb P\{\forall t>0\; N_m(t)<\infty\}=1,\;m\geq1,$
the process $T^{(+,m)}$ is non-explosive too, if 
\[
\sum_{m\geq1}1/\mu_m^{(i)}=\infty,\qquad i\in\{0, 1\}.
\]
Let $M=M(t)\in\{0, 1,\ldots\}$ be
the counting  process   
\begin{equation}
\label{def:M}
M(t)=\max\{m~:~T^{(+,m)}\leq t\}, \qquad t>0.
\end{equation}

The marginal distributions of the process $\varepsilon=\varepsilon(t),\;t\geq0,$ 
indicating the current pattern, are defined by
\[
\mathbb P\{\varepsilon(t+\Delta t)=i~|~\varepsilon(t)=i,\;M(t)=m\}=\exp(-\mu_{N_m(t)}^{(i)}\cdot\Delta t)+o(\Delta t),
\qquad \Delta t\to0,\quad i\in\{0, 1\},
\]
$\varepsilon(t)=\varepsilon_m\in\{0, 1\},\;$ if $t\in[T^{(+,m)},\;T^{(+,m+1)}).$ 
By $\mathcal F_t,\;t\geq0,$ we denote the corresponding filtration.

Process $M=M(t)$ can be treated as a doubly stochastic Poisson process,
see \cite{Bremaud,Grandell}.
A similar approach is exploited in
\cite{Daley1}, see there Example 6.3(e), Example 7.3(a) and Example 7.4(e) 
(bivariate Poisson process).
Meanwhile this model differs from the model of mixed Poisson process 
(when the parameter $\mu$ of Poisson process 
is considered as the outcome of a positive $\mathcal F_0$-measurable 
random variable)
widely exploited in actuarial applications, see e.g. \cite{grandel,schmidt}.

The problem of infinite accumulation of arrivals for doubly stochastic Poisson process
(when the interarrival times are Poisson-modulated) is more complicated than for a simple point process,
see Section \ref{sec:PMED} for the analysis of this problem.

 The transport piecewise linear processes based on a mixed Poisson process 
have been recently presented in
\cite{STAPRO118,STAPRO129}. 
For the $2$D-case of such random motions
with jumps  (and with constant intensity of switchings)
see  \cite{RobertoEnzo}.
 The similar piecewise linear process $\mathbb L$ with
 the deterministically growing intensities $\mu^{(m)}=m\nu,\;\nu>0,$
has been studied in  \cite{DiC-Barbara-Zacks}. 

In this paper we study
 the piecewise linear renewal process $\mathbb L(t)$ based on the two alternating
sets of  tendencies, $\{c^{(0)}(n)\}_{n\geq0}$ and $\{c^{(1)}(n)\}_{n\geq0},$ 
with Poisson-modulated exponential distributions of patterns' holding times.

First, consider the sequence of independent piecewise linear processes 
 \[
 l_m(t)=\int_0^tc^{(\varepsilon_m)}(N_m(s))\mathrm{d} s,\qquad \varepsilon_m\in\{0, 1\},\quad m\geq0,
 \]
 which follows the  sequence of tendencies 
 ($\{c^{(0)}(n)\}_{n\geq0}$ or $\{c^{(1)}(n)\}_{n\geq0}$)
  with switchings at Poisson random times.
Process $\mathbb L(t)$ successively follows  the 
two patterns alternating after holding times $T_m:$
 \begin{equation}
\label{def:L}
\mathbb L(t) =\int_0^tl_{M(s)}(s)\mathrm{d} s
=\int_0^t\left(\int_0^sc^{(\varepsilon(s))}\left(N_{M(s)}(u)\right)\mathrm{d} u\right)\mathrm{d} s
=\sum_{m=1}^{M(t)}l_{m-1}(T_m)+l_{M(t)}\left(t-T^{(+, M(t))}\right), t\geq0.
\end{equation} 

Bearing in mind applications, we supply  $\mathbb L(t)$
 with  jumps.  Properties of process $\mathbb L$ with jumps and 
with exponentially distributed time intervals $T_m$ 
under the  arbitrary set of  tendencies $c_m$ 
is recently studied by \cite{STAPRO90}. The processes with 
renewal restarting points (instead of jumps) is studied by \cite{STAPRO131}.
Here we analyse 
properties of  such processes  
which follow the alternating patterns
with Poisson-modulated exponential  distributions of switching times 
and with jumps 
$r^{(i)}(n)$ and $R^{(i)}(n),\;n\geq0,$ $  i\in\{0, 1\},$
 accompanying the tendency switchings and the changes of patterns, respectively.

This approach could be used for financial market modelling
when log-returns are 
determined by the inherent market forces, that is by $\mathbb L(t),\;\{R(n)\},$ and 
by efforts of ``small speculators'' which create the tendency and volatility modulations. 
Let jumps $R(n)$ accompanying the patterns' switchings  occur
as ``corrections'' of the current trend $c(n)$:
\[
c(n)/\mathbb{E}\{R(n)\}<0.
\]
In this case small players trying to change the trend affect only the volatility
and the probability of the next switching of the trend.

We derive coupled integral equations 
 for mean values of the process $\mathbb L$ accompanied with jumps
(see Section \ref{sec:Cox/Hawkes  processes}). Further, the martingale condition is presented: 
this process is a martingale if and only if 
\begin{equation}\label{eq:mart}
c^{(i)}(n)+\lambda_n^{(i)}\overline{r^{(i)}(n)}+\mu_n^{(i)}\overline{R^{(i)}(n)}\equiv0,
\qquad n\geq0,\quad i\in\{0, 1\}.
\end{equation}
Here $\overline{r^{(i)}(n)}$ and $\overline{R^{(i)}(n)}$ are the expectations of the random jump values.
The same condition characterises martingales in a rather different model, 
when holding times $T_m$
are independent of the underlying Poisson processes $N_m,$ 
 see  \cite[Corollary 3.1]{ALEA17}.
Condition \eqref{eq:mart} looks similar to the martingale condition for a simple jump-telegraph process, see  \cite{Quant}:
\begin{equation}\label{eq:mart-classic}
c^{(i)}+\lambda^{(i)} r^{(i)}=0,\quad i\in\{0, 1\}.
\end{equation}

The text is organised as follows.
The detailed analysis of Poisson-modulated exponential  distributions
is presented in Sections \ref{sec:PMED} and   \ref{Sec:damped}. 
In Section \ref{sec:Cox/Hawkes  processes} we study the
piecewise linear process $\mathbb L$ with jumps.

In Section \ref{sec:market} we propose a financial market model based on these processes. 
This model generalises the simple jump-telegraph market model, \cite{Quant}.

%%%%%%%%%%%%%%%%%%%%%%%%%%%%%%%%%%%%%%%%%%%%
%%%%%%%%%%%%%%%%%%%%%%%%%%%%%%%%%%%%%%%%%%%%
\section{Poisson-modulated exponential  distribution}\label{sec:PMED}
\setcounter{equation}{0}

In this section we present some properties of the Poisson modulated exponential distributions
which we will use later.

Let  $\{\tau_n\}_{n\geq0}$ be   the  sequence
of independent and 
exponentially distributed  
$\mathrm{Exp}(\lambda_n),$ \break $\lambda_n>0,\; n\geq0,$ random variables, 
and $N=N(t)=N(t; \vec\lambda),\;t\geq0,$ be 
the  non-explosive, \eqref{eq:SPP}, renewal  counting process.
 Denote the probability mass function of $N(t)$
at $n$ by $\pi_n(t)=\mathbb{P}\{N(t)=n\},\;n\geq0$. If all $\lambda_n$ are equal, $\lambda_n\equiv\lambda,$ then 
$\pi_n(t)=\mathrm{e}^{-\lambda t}(\lambda t)^n/n!;$ 
in the case of distinct $\lambda_n$ 
 by \cite[formula (2.4), Proposition 2.1]{STAPRO90}, we have 
\begin{equation}
\label{eq:pin}
\pi_n(t)=\Lambda_na_n(t; \vec\lambda),
\end{equation}
where $\Lambda_n=\prod_{k=0}^{n-1}\lambda_k,\;n\geq1,\;\Lambda_0=1,$
\begin{equation}\label{def:a}
a_n(t)=a_n(t; \vec\lambda)
=\sum_{k=0}^{n}\kappa_{n,k}(\vec\lambda)\mathrm{e}^{-\lambda_kt},\qquad t>0,
\end{equation}
and coefficients $\kappa_{n,k}(\vec\lambda)$
are defined by
\begin{equation}
\label{def:kappa}
\kappa_{n,k}(\vec\lambda)
=\prod\limits_{\stackrel{j=0}{j\neq k}}^{n}(\lambda_j-\lambda_k)^{-1},\;n\geq k,
\qquad \kappa_{0,0}=1.
\end{equation}
In the non-explosive case, \eqref{eq:SPP},
\[
\sum_{n}\pi_n(t)\equiv1,\qquad\forall t>0.
\]
In the case when not all $\lambda'$s are distinct the usual changes in notations
 should be applied, see \cite{STAPRO90}.

Notice, that coefficients $\kappa_{n,k}$ satisfy the following known 
Vandermonde properties\textup{:}  for  $n\geq1$
\begin{equation}
\label{Vandermonde}
   \sum_{k=0}^n\kappa_{n, k}(\vec\lambda) \lambda_k^m=0, \quad 0\leq m\leq n-1, \qquad
     \sum_{k=0}^n\kappa_{n, k}(\vec\lambda) \lambda_k^n =(-1)^n,
\end{equation}
(see e.g. \cite{Kuznetsov}, p. 11).

Due to identities \eqref{Vandermonde}, functions 
$a_n=a_n(t)=\sum_{k=0}^{n}\kappa_{n,k}(\vec\lambda)\exp(-\lambda_kt),$ 
satisfy the following conditions: 
 for $n\geq1$
 \begin{equation}\label{eq:a}
 a_n(0)=0,\qquad
\frac{\mathrm{d}^m a_n(t)}{\mathrm{d} t^m}|_{t=0}=0,\quad 0<m\leq n-1,
\qquad \text{and}\qquad\frac{\mathrm{d}^n a_n(t)}{\mathrm{d} t^n}|_{t=0}=1;
\end{equation}
$a_0(t)=\exp(-\lambda_0t).$

Consider the random variable $T,\;T>0,$ 
with Poisson-modulated exponential distribution, ${\rm PoExp}(\vec\lambda, \vec\mu).$ 
See \eqref{eq:barF}-\eqref{def:xit}. 

Let us begin by studying the properties of the accumulated intensity $\xi(t),$ 
based on the non-explosive counting process $N(t)$, 
\eqref{def:xit}.
Consider the moment generating function  $\psi(z,t)$ of  $\xi(t),$ 
\begin{equation}
\label{eq:MGFn}
\psi(z, t)=\mathbb{E}\{\mathrm{e}^{-z\xi(t)}\}
=\sum_{n=0}^\infty\psi_n(z, t),
\end{equation}
where 
\begin{equation}\label{def:psin1}
\psi_n(z, t)=\mathbb{E}\{\mathrm{e}^{-z\xi(t)}\mathbf{1}_{N(t)=n}\},
\qquad t>0,
\qquad n\geq0.
\end{equation}
By definition we have 
\begin{equation}
\label{eq:T>t-n}
\mathbb{P}\{T>t,\; N(t)=n\}=\psi_n(1, t).
\end{equation}

Assume that all linear $z$-functions
\begin{equation}\label{def:tilde}
z\to\lambda_n+z\mu_n=\tilde\lambda_n(z)=\tilde\lambda_n,\;n\geq0,
\end{equation}
 are distinct,
$\tilde\lambda_n
\neq
\tilde\lambda_k$  
(if  $n\neq k$)
one can easily obtain the explicit expression  for $\psi_n$
by means of $a_k,\;k\leq n.$ Applying  \cite[Theorem 3.1]{STAPRO90},
we have 
\begin{equation}
\label{eq:psin1}
\psi_n(z, t)=\Lambda_n\sum_{k=0}^n\kappa_{n,k}(\tilde\lambda)\mathrm{e}^{-\tilde\lambda_kt}
=\Lambda_na_n(t;\tilde\lambda),
\end{equation}
where $\kappa_{n,k}(\tilde\lambda)$ and $a_n(t;\tilde\lambda)$
are defined by \eqref{def:kappa} and \eqref{def:a} 
with $\tilde\lambda=\tilde\lambda(z)$ instead of $\vec\lambda$.

We introduce the notation 
\begin{equation}
\label{def:bk}
b_k(\vec\lambda, z\vec\mu):=
\sum_{n=k}^\infty\Lambda_n\kappa_{n, k}(\tilde\lambda)<\infty,\qquad k\geq0,
\end{equation}
assuming convergence of the series. 
From \eqref{eq:psin1}  
one can obtain the representation of $\psi(z, t),$ which is equivalent to \eqref{eq:MGFn}: 
  \[
  \psi(z, t)=\sum_{n=0}^\infty\psi_n(z, t)
  =\sum_{n=0}^\infty\Lambda_n\sum_{k=0}^n
  \kappa_{n,k}(\tilde\lambda)\mathrm{e}^{-\tilde\lambda_kt}
  =\sum_{k=0}^\infty\left(\sum_{n=k}^\infty\Lambda_n\kappa_{n, k}(\tilde\lambda)\right)
  \mathrm{e}^{-\tilde\lambda_k t}
  =\sum_{k=0}^\infty b_k(\vec\lambda, z\vec\mu)\mathrm{e}^{-\tilde\lambda_kt},
  \]
if the series in \eqref{def:bk} converge. 
This representation is consistent with the identities 
$\psi(z, 0)=\mathbb E\{\mathrm e^{-z\xi(t)}|_{t=0}\}\equiv1$ 
and $\psi(0, t)=\mathbb E\{\mathrm e^{-z\xi(t)}|_{z=0}\}\equiv1$:
\[
\psi(z, 0)=\sum_{k=0}^\infty b_k(\vec\lambda, z\vec\mu)
=\sum_{n=0}^\infty\Lambda_n\sum_{k=0}^n\kappa_{n, k}(\tilde\lambda)
=1+\sum_{n=1}^\infty\Lambda_n\sum_{k=0}^n\kappa_{n, k}(\tilde\lambda)\equiv1,
\]
since due to \eqref{Vandermonde} (with $m=0$)  
$\sum_{k=0}^n\kappa_{n,k}(\tilde\lambda)\equiv0,\;n\geq1;$
 and by \eqref{eq:pin}-\eqref{def:a}
 \[
 \psi(0, t)=\sum_{k=0}^\infty \mathrm e^{-\lambda_kt}\sum_{n=k}^\infty\Lambda_n\kappa_{n, k}(\vec\lambda)
 =\sum_{n=0}^\infty\Lambda_n\sum_{k=0}^n\kappa_{n, k}(\vec\lambda)\mathrm e^{-\lambda_kt}
 =\sum_{n=0}^\infty\pi_n(t)\equiv1.
 \]

\begin{rem}
 Notice that equalities \eqref{eq:MGFn}-\eqref{def:psin1} and \eqref{eq:psin1} 
return us to some known formulae for the  
$\lambda_n,\;\lambda_n\equiv\lambda,\;\lambda>0.$
 
By \eqref{eq:psin1}  and \eqref{def:kappa}  we have
\begin{equation*}
\psi_n(z, t)=\lambda^nz^{-n}\sum_{k=0}^n\kappa_{n, k}(\vec\mu)\mathrm{e}^{-z\mu_kt} \mathrm{e}^{-\lambda t}
=\lambda^nz^{-n}a_n(zt) \mathrm{e}^{-\lambda t},
\end{equation*}
which due to \eqref{eq:a}
gives 
\[
\pi_n(t)=\psi_n(z, t)|_{z\downarrow0}=\lambda^n\mathrm e^{-\lambda t}
\cdot\lim_{z\downarrow0}\left[z^{-n}a_n(zt)\right]
=\lambda^n\mathrm e^{-\lambda t}
\lim_{z\downarrow0}\left[z^{-n}\frac{\mathrm{d}^n a_n(t)}{\mathrm{d} t^n}|_{t\downarrow0}\frac{(zt)^n}{n!}\right]
=\frac{(\lambda t)^n}{n!}\mathrm{e}^{-\lambda t}.
\]

Further, by \eqref{def:tilde}    one has
\[
b_k(\lambda, z\vec\mu)=\sum_{n=k}^\infty z^{-n}\lambda^n\kappa_{n,k}(\vec\mu).
\]

Note that the series convergence here and in \eqref{def:bk} occurs, for example, if
\[
|\mu_n-\mu_m|\geq\nu>0,\qquad n\neq m.
\]
Indeed, $|\kappa_{n, k}(\vec\mu)|\leq\nu^{-n}$ and 
\[
|b_k(\lambda, z\vec\mu)|\leq\sum_{n=k}^\infty\left(\frac{\lambda}{|z|\nu}\right)^n
=\frac{\left(\lambda/|z|\nu\right)^k}{1-\lambda/|z|\nu}<\infty
\]
for $|z|>\lambda/\nu$.

In the linear case,  $\mu_n=\mu+n\nu$ \textup{(}see Section \textup{\ref{Sec:damped}),}
\[
\kappa_{n,k}(\tilde\lambda) 
=z^{-n}\frac{(-1)^k\nu^{-n}}{k!(n-k)!}
\]
and 
\[
b_k(\lambda, z\vec\mu)
=\sum_{n=k}^\infty\frac{(-1)^k\lambda^n(z\nu)^{-n}}{k!(n-k)!}
=\frac{(-\lambda/z\nu)^k}{k!}\mathrm{e}^{\lambda/z\nu}<\infty,\qquad z\neq0.
\]
\end{rem}

We express the distribution of $T$ in these terms,
starting with the following useful formulas.

\begin{prop}\label{prop-densities}
If all sums $\lambda_n+\mu_n,\;n\geq0,$ are distinct, then
\begin{align}
\label{1}
   \mathbb{P}\{T>t,\;N(t)=n\} &=  \Lambda_n a_n(t; \overrightarrow{\lambda+\mu}),    \\
    \mathbb{P}\{T\in\mathrm{d} t,\;N(t)=n\}&=  \mu_n\Lambda_na_n(t;  \overrightarrow{\lambda+\mu})  \mathrm{d} t,
    \label{2}
\end{align}
$t\geq0,$ where functions $a_n(t; \overrightarrow{\lambda+\mu})$ are defined by \eqref{def:a}.
\end{prop}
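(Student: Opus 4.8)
The plan is to reduce both identities to the representation \eqref{eq:psin1} of the restricted moment generating function $\psi_n(z,t)$ evaluated at $z=1$. First note that the hypothesis that all sums $\lambda_n+\mu_n$ be distinct is precisely the requirement that the linear functions $\tilde\lambda_n(z)=\lambda_n+z\mu_n$ from \eqref{def:tilde} be pairwise distinct at the point $z=1$, so that \eqref{eq:psin1} is applicable there and reads $\psi_n(1,t)=\Lambda_n a_n(t;\tilde\lambda(1))$ with $\tilde\lambda(1)=\overrightarrow{\lambda+\mu}$. Then \eqref{1} is immediate: by \eqref{def:psin1} and \eqref{eq:T>t-n} one has $\mathbb{P}\{T>t,\;N(t)=n\}=\psi_n(1,t)$, whence $\mathbb{P}\{T>t,\;N(t)=n\}=\Lambda_n a_n(t;\overrightarrow{\lambda+\mu})$. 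There is no real obstacle at this step.

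For \eqref{2} I would exploit the construction of $T$ underlying \eqref{eq:barF}--\eqref{def:xit}, namely that, conditionally on the whole trajectory of $N$, the random time $T$ is the first point of an inhomogeneous Poisson process with intensity $s\mapsto\mu_{N(s)}$; equivalently its conditional survivor function is $e^{-\xi(t)}$ with $\xi$ absolutely continuous, $\xi'(s)=\mu_{N(s)}$, so its conditional sub-density at $t$ is $\mu_{N(t)}e^{-\xi(t)}$. In particular $T$ almost surely avoids the jump times of $N$, so $N(T-)=N(T)$ and the event $\{N(t)=n\}$ at $t=T$ is unambiguous. Multiplying the conditional sub-density by $\mathbf 1_{N(t)=n}$, taking expectations, and using $\mu_{N(t)}\mathbf 1_{N(t)=n}=\mu_n\mathbf 1_{N(t)=n}$ gives
\[
\mathbb{P}\{T\in\mathrm{d}t,\;N(t)=n\}
=\mu_n\,\mathbb{E}\bigl\{e^{-\xi(t)}\mathbf 1_{N(t)=n}\bigr\}\,\mathrm{d}t
=\mu_n\,\psi_n(1,t)\,\mathrm{d}t
=\mu_n\Lambda_n a_n(t;\overrightarrow{\lambda+\mu})\,\mathrm{d}t,
\]
the last equality by \eqref{1}.

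The delicate point — the one I expect to require the most care — is turning the heuristic ``$\mathrm{d}t$'' in the last display into a genuine statement, i.e.\ justifying that differentiation in $t$ passes under the indicator $\mathbf 1_{N(t)=n}$. The cleanest rigorous route is to establish the integrated form: for every $0\le a<b$, conditioning on $N$ and applying Tonelli to the nonnegative integrand yields $\mathbb{P}\{a<T\le b,\;N(T)=n\}=\mu_n\int_a^b\psi_n(1,s)\,\mathrm{d}s$, after which \eqref{2} is simply its differential version, $\psi_n(1,\cdot)$ being continuous. Alternatively one can differentiate $G_n(t):=\psi_n(1,t)=\mathbb{P}\{T>t,\;N(t)=n\}$ using the forward (Chapman--Kolmogorov) equations $G_n'(t)=\lambda_{n-1}G_{n-1}(t)-(\lambda_n+\mu_n)G_n(t)$ (with $G_{-1}\equiv0$), and interpret the term $-\mu_nG_n(t)$ as the probability flux leaving $\{T>t,\;N(t)=n\}$ through the expiry of $T$, the $\lambda$-terms accounting for the flux carried by jumps of $N$; summing over $n$ recovers $\mathbb{P}\{T\in\mathrm{d}t\}=\sum_n\mu_n G_n(t)\,\mathrm{d}t$ and confirms the split. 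In either case, combining with \eqref{eq:psin1} completes the proof.
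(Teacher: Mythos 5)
Your proof is correct, and it diverges from the paper's in a way worth noting. For \eqref{1} you do exactly what the paper does: combine \eqref{eq:T>t-n} with \eqref{eq:psin1} at $z=1$, correctly observing that distinctness of the $\lambda_n+\mu_n$ is exactly distinctness of the $\tilde\lambda_n(1)$. For \eqref{2} both arguments start from the same conditional-density identity $\mathbb{P}\{T\in\mathrm{d} t,\;N(t)=n\}/\mathrm{d} t=\mathbb{E}\{\mu_{N(t)}\mathrm{e}^{-\xi(t)}\mathbf{1}_{\{N(t)=n\}}\}$, but from there the routes differ: the paper writes out this expectation explicitly using the joint law of $(\tau_0,\ldots,\tau_{n-1})$ on the event $\{N(t)=n\}$ and evaluates the resulting simplex integral by citing a formula from an earlier reference, whereas you simply pull the constant $\mu_n$ out of the expectation on $\{N(t)=n\}$ and recognize what remains as $\psi_n(1,t)$, which \eqref{1} has already computed. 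Your reduction of \eqref{2} to ``$\mu_n$ times \eqref{1}'' is shorter, avoids recomputing the simplex integral, and makes the structural reason for the factor $\mu_n$ transparent. You are also more careful than the paper on the one genuinely delicate point: the paper asserts the differential identity ``by \eqref{eq:barF}--\eqref{def:xit}'' without comment, while you justify it via the integrated form $\mathbb{P}\{a<T\le b,\;N(T)=n\}=\mu_n\int_a^b\psi_n(1,s)\,\mathrm{d} s$ (conditioning on $N$ plus Tonelli) and note that $T$ almost surely avoids the jump times of $N$, so $\{N(T)=n\}$ is unambiguous. The alternative flux argument via the forward equations $G_n'=\lambda_{n-1}G_{n-1}-(\lambda_n+\mu_n)G_n$ is consistent but not needed; the Tonelli route already closes the argument.
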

\begin{proof}
Equality \eqref{1} follows from \eqref{eq:T>t-n} and \eqref{eq:psin1}.
   
   To prove \eqref{2} note that by \cite[$\left(\right.$formula (2.5)$\left.\right)$]{STAPRO90} the 
   joint distribution of $(\tau_0,\ldots,\tau_{n-1})\mathbf{1}_{\{N(t)=n\}}$     is given by
   \begin{equation*}
\mathbb{P}\{\tau_0\in\mathrm{d} s_0,\ldots,\tau_{n-1}\in\mathrm{d} s_{n-1},\; N(t)=n\}=
\Lambda_n\mathrm{e}^{-\lambda_nt}\exp\left(
-\sum_{k=0}^{n-1}(\lambda_k-\lambda_n)s_k
\right)\mathbf{1}_{\Xi_n(t)}(\vec s)\mathrm{d}\vec s,
\end{equation*}
where  
$\mathbf{1}_{\Xi_n(t)}(\vec s)=\begin{cases}
     1, & \text{ if } \vec s\in\Xi_n(t) \\
     0, & \text{otherwise}
\end{cases},$
where
$\Xi_n(t):=\{\vec s=(s_0,\ldots, s_{n-1})\in\mathbb{R}^n_+~|~s^{(+,n)}=s_0+\ldots+s_{n-1}<t\}.$

By \eqref{eq:barF}-\eqref{def:xit}
   \begin{equation*}
\begin{aligned}
&\mathbb{P}\{T\in\mathrm{d} t,\; N(t)=n\}/\mathrm{d} t
=\mathbb E\left\{\mu_{N(t)}\exp\left(-\xi(t)\right)\mathbf{1}_{\{N(t)=n\}}\right\}\\
=&\mu_n\mathrm e^{-\mu_nt}\int_{\Xi_n(t)}\exp\left(
-\sum_{k=0}^{n-1}(\mu_k-\mu_n)s_k
\right)\cdot
\Lambda_n\mathrm{e}^{-\lambda_nt}\exp\left(
-\sum_{k=0}^{n-1}(\lambda_k-\lambda_n)s_k
\right)
\mathrm{d}\vec s.
\end{aligned}\end{equation*}

Applying  \cite[formula (3.5)]{STAPRO90} we get \eqref{2}. 
\end{proof}

If some of $\lambda_n+\mu_n$ are equal, the usual changes should be applied
(for details, see \cite{ALEA17}).

\begin{cor}
Let the series \eqref{def:bk} converges.

Under the conditions of Proposition \ref{prop-densities} 
\begin{itemize}
\item
the survivor function $\overline F_T(t)=\mathbb{P}\{T>t\}$ has the form 
\begin{equation}
\label{eq:FT}
\overline F_T(t)=\sum_{k=0}^\infty b_k(\vec\lambda, \vec\mu)\mathrm{e}^{-(\lambda_k+\mu_k)t},\quad t\geq0;
\end{equation}
\item
the density function $f_T(t)$ is given by
\begin{equation}\label{eq:fT1}
f_T(t)=\sum_{k=0}^\infty (\lambda_k+\mu_k)b_k(\vec\lambda, \vec\mu)\mathrm{e}^{-(\lambda_k+ \mu_k)t},
\end{equation}
if the series in \eqref{eq:fT1} converges.
In particular,
$f_T(0)=\mu_0$.
\end{itemize}
\end{cor}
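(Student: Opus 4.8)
The plan is to derive both claims directly from Proposition~\ref{prop-densities} by summing over $n$. For the survivor function, start from \eqref{1}, which gives $\mathbb{P}\{T>t,\;N(t)=n\}=\Lambda_n a_n(t;\overrightarrow{\lambda+\mu})$, so that
\[
\overline F_T(t)=\sum_{n=0}^\infty\mathbb{P}\{T>t,\;N(t)=n\}
=\sum_{n=0}^\infty\Lambda_n\sum_{k=0}^n\kappa_{n,k}(\overrightarrow{\lambda+\mu})\mathrm{e}^{-(\lambda_k+\mu_k)t}.
\]
Then I would interchange the order of summation, collecting the coefficient of $\mathrm{e}^{-(\lambda_k+\mu_k)t}$, which is exactly $\sum_{n=k}^\infty\Lambda_n\kappa_{n,k}(\overrightarrow{\lambda+\mu})=b_k(\vec\lambda,\vec\mu)$ by the definition \eqref{def:bk} (with $z=1$). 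This yields \eqref{eq:FT}. Note this is precisely the computation already carried out in the text for $\psi(z,t)$ with the substitution $z=1$, so one may alternatively just quote $\overline F_T(t)=\psi(1,t)$ from \eqref{eq:barF} and \eqref{eq:MGFn} together with the displayed formula for $\psi(z,t)$.

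For the density, I would differentiate \eqref{eq:FT} term by term: $f_T(t)=-\tfrac{\mathrm d}{\mathrm dt}\overline F_T(t)=\sum_{k=0}^\infty(\lambda_k+\mu_k)b_k(\vec\lambda,\vec\mu)\mathrm{e}^{-(\lambda_k+\mu_k)t}$, which is \eqref{eq:fT1}. Alternatively — and this is the cleaner route that avoids justifying termwise differentiation of one series by invoking convergence of another — start instead from \eqref{2}, which already gives $\mathbb{P}\{T\in\mathrm dt,\;N(t)=n\}=\mu_n\Lambda_n a_n(t;\overrightarrow{\lambda+\mu})\,\mathrm dt$; summing over $n$ and interchanging summation order gives $f_T(t)=\sum_{k=0}^\infty\big(\sum_{n=k}^\infty\mu_n\Lambda_n\kappa_{n,k}(\overrightarrow{\lambda+\mu})\big)\mathrm{e}^{-(\lambda_k+\mu_k)t}$. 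One then has to recognize the inner sum as $(\lambda_k+\mu_k)b_k$. This identification is a Vandermonde-type manipulation: writing $\mu_n=(\lambda_n+\mu_n)-\lambda_n$ and using \eqref{Vandermonde} applied to the shifted parameters, the $\lambda_n$ part cancels appropriately in the limiting summation, leaving $(\lambda_k+\mu_k)b_k$. For the value at the origin, $f_T(0)=\sum_k(\lambda_k+\mu_k)b_k$, and since $\overline F_T(0)=1$ forces $\sum_k b_k=1$ while all the $a_n$ with $n\ge1$ vanish at $t=0$ by \eqref{eq:a}, only the $k=0$, $n=0$ term survives in $f_T(0)$, giving $f_T(0)=(\lambda_0+\mu_0)b_0$; but $b_0$ must be computed, and from \eqref{2} with $n=0$ one has $\mathbb P\{T\in\mathrm dt,\,N(t)=0\}=\mu_0\mathrm e^{-(\lambda_0+\mu_0)t}\mathrm dt$, forcing the $k=0$ coefficient to be $\mu_0/(\lambda_0+\mu_0)\cdot(\lambda_0+\mu_0)$... more directly, $a_0(t;\overrightarrow{\lambda+\mu})=\mathrm e^{-(\lambda_0+\mu_0)t}$ so the $n=0$ contribution to $f_T$ is $\mu_0\mathrm e^{-(\lambda_0+\mu_0)t}$, and evaluating the full series at $t=0$ with \eqref{eq:a} killing all higher $a_n$ leaves $f_T(0)=\mu_0$.

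The main obstacle is the justification of interchanging the order of the double summation $\sum_n\sum_{k=0}^n$ into $\sum_k\sum_{n\ge k}$, and (in the first route) the termwise differentiation. Both are legitimate exactly under the standing hypothesis that the series \eqref{def:bk} converges, together with the assumed convergence of \eqref{eq:fT1}; absolute convergence of $\sum_k b_k(\vec\lambda,\vec\mu)\mathrm e^{-(\lambda_k+\mu_k)t}$ on $t>0$ (which follows from $\lambda_k+\mu_k\to\infty$, itself a consequence of the non-explosiveness assumption together with the growth needed for \eqref{def:bk} to converge) lets Fubini's theorem for series apply and lets us differentiate inside the sum on any interval $[\delta,\infty)$. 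I would state this convergence as the hypothesis of the corollary (as is already done) and keep the proof short, remarking only that the rearrangement is the same one validated in the computation of $\psi(z,t)$ preceding Proposition~\ref{prop-densities}.
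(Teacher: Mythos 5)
Your proof is correct and follows essentially the same route as the paper: sum \eqref{1} over $n$ and interchange the order of summation to obtain \eqref{eq:FT} (equivalently, read off $\overline F_T(t)=\psi(1,t)$), then differentiate termwise for \eqref{eq:fT1}; the paper likewise records the alternative expression obtained by summing \eqref{2} and shows the two agree. One caveat: the identification $\sum_{n\ge k}\mu_n\Lambda_n\kappa_{n,k}(\overrightarrow{\lambda+\mu})=(\lambda_k+\mu_k)b_k$ is not a direct application of \eqref{Vandermonde}; the paper proves it by writing $\mu_n\Lambda_n=(\lambda_n+\mu_n)\Lambda_n-\Lambda_{n+1}$ and using the recurrence $(\lambda_n+\mu_n-\lambda_k-\mu_k)\kappa_{n,k}(\overrightarrow{\lambda+\mu})=\kappa_{n-1,k}(\overrightarrow{\lambda+\mu})$, after which the two extra sums telescope away — this is the precise mechanism behind the cancellation you describe only loosely as ``a Vandermonde-type manipulation.'' The other divergence is $f_T(0)=\mu_0$: your first attempt (arguing that $\sum_k b_k=1$ forces only the $k=0$ term to survive, hence $f_T(0)=(\lambda_0+\mu_0)b_0$) is a non sequitur and false in general, but the argument you settle on — evaluate $f_T(t)=\sum_n\mu_n\Lambda_n a_n(t;\overrightarrow{\lambda+\mu})$ at $t=0$ using $a_n(0)=0$ for $n\ge1$ and $a_0(0)=1$ — is valid and arguably cleaner than the paper's, which instead computes $\sum_k(\lambda_k+\mu_k)b_k=\mu_0$ directly from the Vandermonde identities \eqref{Vandermonde} (only the $n=0$ and $n=1$ blocks contribute, giving $(\lambda_0+\mu_0)-\lambda_0$).
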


\begin{proof}
If    \eqref{def:bk} holds, under the conditions of Proposition \ref{prop-densities}
the survivor function   
takes the form
\begin{equation*}
\overline F_T(t)
=\sum_{n=0}^\infty\Lambda_na_n(t; \overrightarrow{\lambda+\mu})
=\sum_{n=0}^\infty\Lambda_n\sum_{k=0}^n
\kappa_{n, k}(\overrightarrow{\lambda+\mu})\mathrm{e}^{-(\lambda_k+\mu_k)t},\quad t\geq0,
\end{equation*}
which gives \eqref{eq:FT}.
By differentiation the density function takes the form \eqref{eq:fT1},
 if the series in \eqref{eq:fT1} converges.

On the other hand, by \eqref{2} we have 
\begin{equation}
\label{eq:fT2}
f_T(t)=\sum_{n=0}^\infty\mu_n\Lambda_na_n(t; \overrightarrow{\lambda+\mu})
=\sum_{k=0}^\infty\mathrm{e}^{-(\lambda_k+ \mu_k)t}
\sum_{n=k}^\infty\mu_n\Lambda_n\kappa_{n, k}(\overrightarrow{\lambda+\mu}).
\end{equation}
Representations \eqref{eq:fT1} and \eqref{eq:fT2} are equivalent.
Indeed, by definition  \eqref{def:kappa} of $\kappa_{n, k}:$ 
\[\begin{aligned}
\sum_{n=k}^\infty&\mu_n\Lambda_n\kappa_{n, k}(\overrightarrow{\lambda+\mu})
=\sum_{n=k}^\infty(\mu_n+\Lambda_n)\lambda_n\kappa_{n, k}(\overrightarrow{\lambda+\mu})
-\sum_{n=k}^\infty\Lambda_{n+1}\kappa_{n,k}(\overrightarrow{\lambda+\mu})\\
&=\sum_{n=k+1}^\infty\Lambda_n\kappa_{n-1, k}(\overrightarrow{\lambda+\mu})
+(\mu_k+\lambda_k)\sum_{n=k}^\infty\Lambda_n\kappa_{n, k}(\overrightarrow{\lambda+\mu})
-\sum_{n=k}^\infty\Lambda_{n+1}\kappa_{n,k}(\overrightarrow{\lambda+\mu})\\
&=(\mu_k+\lambda_k)\sum_{n=k}^\infty\Lambda_n\kappa_{n, k}(\overrightarrow{\lambda+\mu}),
\end{aligned}\]
since \eqref{def:bk} holds.

Moreover, in this case by \eqref{def:bk}
one can see that
\begin{equation*}
\begin{aligned}
\sum_{k=0}^\infty (\lambda_k+\mu_k) b_k(\vec\lambda, \vec\mu)
=&\sum_{n=0}^\infty\Lambda_n\sum_{k=0}^n(\lambda_k+\mu_k)\kappa_{n, k}(\tilde\lambda)\\
=&(\lambda_0+\mu_0)+\lambda_0\sum_{k=0}^1(\lambda_k+\mu_k)\kappa_{1, k}(\tilde\lambda)
+\sum_{n=2}^\infty\Lambda_n\sum_{k=0}^n(\lambda_k+\mu_k)\kappa_{n, k}(\tilde\lambda).\end{aligned}
\end{equation*}
Therefore, by \eqref{Vandermonde}
we have 
\begin{equation*}
\sum_{k=0}^\infty (\lambda_k+\mu_k) b_k(\vec\lambda, \vec\mu)
=\mu_0.
\end{equation*}
Hence $f_T(0)=\mu_0$.
\end{proof}

The convergence in \eqref{def:bk} plays the role of a non-exploding condition.
\begin{prop}
Let $T$ be a Poisson-modulated exponential, $\mathrm{PoExp}(\vec\lambda, \vec\mu),$ random variable.

The distribution of variable $T$ is proper, that is
\[
\mathbb P\{T<\infty\}=1,
\]
if \eqref{def:bk} holds.
\end{prop}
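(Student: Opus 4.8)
The plan is to reduce the claim to the vanishing of the survivor tail. Since $\{T=\infty\}=\bigcap_{t>0}\{T>t\}$ is a decreasing family, $\mathbb P\{T=\infty\}=\lim_{t\to\infty}\overline F_T(t)$, and this limit exists because, by \eqref{eq:barF}--\eqref{def:xit}, $\overline F_T(t)=\mathbb E\{\mathrm e^{-\xi(t)}\}$ is nonincreasing in $t$ (the accumulated intensity $\xi(t)$ being nondecreasing) and takes values in $[0,1]$. Hence the proposition amounts to showing $\overline F_T(\infty):=\lim_{t\to\infty}\overline F_T(t)=0$.

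The direct approach invokes the representation \eqref{eq:FT} from the corollary to Proposition \ref{prop-densities}, which holds exactly because \eqref{def:bk} converges: $\overline F_T(t)=\sum_{k\geq0}b_k(\vec\lambda,\vec\mu)\mathrm e^{-(\lambda_k+\mu_k)t}$. Each summand tends to $0$ as $t\to\infty$ since $\lambda_k+\mu_k>0$, so it remains to carry the limit inside the series. I would do this by establishing uniform convergence of the series on every half-line $[t_0,\infty)$, $t_0>0$: writing $\mathrm e^{-(\lambda_k+\mu_k)t}=\mathrm e^{-(\lambda_k+\mu_k)t_0}\,\mathrm e^{-(\lambda_k+\mu_k)(t-t_0)}$ exhibits the general term as a term of the convergent series $\sum_k b_k\mathrm e^{-(\lambda_k+\mu_k)t_0}=\overline F_T(t_0)$ damped by a factor in $(0,1]$ which, after listing the $\lambda_k+\mu_k$ in increasing order, is nonincreasing in $k$, so an Abel-type (summation-by-parts) estimate controls the tail $\sum_{k\geq K}b_k\mathrm e^{-(\lambda_k+\mu_k)t}$ uniformly in $t\geq t_0$. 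This is the delicate point of the argument, since \eqref{def:bk} only guarantees convergence of the inner series defining each $b_k$, not absolute convergence of $\sum_k b_k$, so plain dominated convergence is unavailable.

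I would also give a self-contained probabilistic derivation that bypasses these summation subtleties and, as a bonus, yields the sharp criterion. By dominated convergence $\overline F_T(\infty)=\mathbb E\{\mathrm e^{-\xi(\infty)}\}$ with $\xi(\infty)=\int_0^\infty\mu_{N(s)}\mathrm ds$; under \eqref{eq:SPP} the process $N$ is non-explosive and $N(\infty)=\infty$ almost surely, whence $\xi(\infty)=\sum_{k\geq0}\mu_k\tau_k$. By independence of the $\tau_k$ and monotone convergence for the partial products, $\overline F_T(\infty)=\prod_{k\geq0}\mathbb E\{\mathrm e^{-\mu_k\tau_k}\}=\prod_{k\geq0}\lambda_k/(\lambda_k+\mu_k)$, which equals $0$ if and only if $\sum_{k\geq0}\mu_k/\lambda_k=\infty$. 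Thus in fact $\mathbb P\{T<\infty\}=1\iff\sum_k\mu_k/\lambda_k=\infty$, and the step I expect to be the main obstacle is to deduce this divergence from the hypothesis that the series in \eqref{def:bk} converge: heuristically, convergence of $\sum_{n\geq k}\Lambda_n\kappa_{n,k}(\overrightarrow{\lambda+\mu})$ keeps the ratios $\lambda_n/|\lambda_{n+1}+\mu_{n+1}-\lambda_k-\mu_k|$ eventually below $1$ and hence $\mu_n/\lambda_n$ bounded away from $0$, but making this rigorous is delicate because the coefficients $\kappa_{n,k}$ alternate in sign.
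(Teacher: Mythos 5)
Your reduction of the claim to $\lim_{t\to\infty}\overline F_T(t)=0$ is sound, but neither of your two routes to that limit is complete, and the paper argues in a different, purely algebraic way: it integrates the density \eqref{eq:fT1} term by term over $[0,\infty)$ to get $\int_0^\infty f_T(t)\,\mathrm{d}t=\sum_{k}b_k(\vec\lambda,\vec\mu)$, unfolds definition \eqref{def:bk} to rewrite this as $\sum_{n}\Lambda_n\sum_{k=0}^n\kappa_{n,k}(\overrightarrow{\lambda+\mu})$, and then annihilates every term with $n\geq1$ by the Vandermonde identity \eqref{Vandermonde} with $m=0$, leaving only the $n=0$ term, which equals $1$. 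That identity is the engine of the paper's proof and appears nowhere in your proposal; no tail analysis of $\overline F_T$ is needed.

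The concrete gaps are these. In your first route, the Dirichlet/Abel estimate for $\sum_{k\geq K}b_k\mathrm{e}^{-(\lambda_k+\mu_k)t}$ requires the factors $\mathrm{e}^{-(\lambda_k+\mu_k)(t-t_0)}$ to be monotone in $k$, i.e.\ the exponents $\lambda_k+\mu_k$ to be increasing, which is not assumed; your fix of relisting the $\lambda_k+\mu_k$ in increasing order rearranges a series whose absolute convergence is never established (the $b_k$ already alternate in sign in the linear example \eqref{eq:bk-ex}, and \eqref{def:bk} controls only each individual $b_k$, not $\sum_k|b_k|$), so the rearrangement is unjustified. In your second route, the product formula $\overline F_T(\infty)=\prod_k\lambda_k/(\lambda_k+\mu_k)$ is correct under \eqref{eq:SPP} and yields the sharp criterion $\sum_k\mu_k/\lambda_k=\infty$, but the implication ``convergence in \eqref{def:bk} implies $\sum_k\mu_k/\lambda_k=\infty$'' is precisely the step you leave as a heuristic about sign-alternating $\kappa_{n,k}$; in this formulation that implication \emph{is} the proposition, so as written this branch proves nothing. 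Either branch might be completable, but each currently has a hole at its only load-bearing point, whereas the paper's Vandermonde computation closes the argument in one line.
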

\begin{proof}
Let \eqref{def:bk} holds.
By integrating in \eqref{eq:fT1},
\[
\int_0^\infty f_T(t)\mathrm dt=\sum_{k=0}^\infty b_k(\vec\lambda, \vec\mu)
=\sum_{n=0}^\infty\Lambda_n\sum_{k=0}^n\kappa_{n, k}(\overrightarrow{\lambda+\mu})
=1+\sum_{n=1}^\infty\Lambda_n\sum_{k=0}^n\kappa_{n, k}(\overrightarrow{\lambda+\mu})
=1.
\]
\end{proof}

The moments of random variable $T$ can be obtained similarly. 
\begin{prop}
If the distribution of $T$ satisfies 
\begin{equation}\label{eq:ETcond}
\lim_{t\to\infty}t \overline F_T(t)=0,
\end{equation}
and the series $\sum_{k=0}^\infty b_k(\vec\lambda, \vec\mu)(\lambda_k+\mu_k)^{-1}$ converges, then
the expectation $\mathbb E\{T\}$ exists and
\[
\mathbb{E}\{T\}=\sum_{k=0}^\infty b_k(\vec\lambda, \vec\mu)(\lambda_k+\mu_k)^{-1}
=\sum_{n=0}^\infty\Lambda_n\Pi_n^{-1}
<\infty,
\]
where $\Pi_n=\prod_{k=0}^n(\lambda_k+\mu_k).$

Moreover,
 if for some $m,\; m\geq1,$
\begin{equation*}
\lim_{t\to\infty}t^m \overline F_T(t)=0,
\end{equation*} 
and the series $\sum_{k=0}^\infty b_k(\vec\lambda, \vec\mu)(\lambda_k+\mu_k)^{-m}$ converges,
then 
\begin{equation}\label{eq:EETm}
\mathbb{E}\{T^m\}=
m!\sum_{k=0}^\infty b_k(\vec\lambda, \vec\mu)(\lambda_k+\mu_k)^{-m}<\infty.
\end{equation}
\end{prop}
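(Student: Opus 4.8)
The plan is to combine the elementary tail formula for the moments of a nonnegative random variable with the series representation \eqref{eq:FT} of the survivor function. First I would integrate by parts,
\[
\mathbb{E}\{T^m\}=-\int_0^\infty t^m\,\mathrm{d}\overline F_T(t)
=\Big[-t^m\overline F_T(t)\Big]_0^\infty+m\int_0^\infty t^{m-1}\overline F_T(t)\,\mathrm{d} t,
\]
where the boundary term at infinity vanishes precisely by the hypothesis $\lim_{t\to\infty}t^m\overline F_T(t)=0$ (at $t=0$ it vanishes trivially for $m\geq1$); equivalently one may invoke Tonelli's theorem directly. Then I would insert $\overline F_T(t)=\sum_{k\geq0}b_k(\vec\lambda,\vec\mu)\mathrm{e}^{-(\lambda_k+\mu_k)t}$ from \eqref{eq:FT} and integrate termwise, using $\int_0^\infty t^{m-1}\mathrm{e}^{-at}\,\mathrm{d} t=(m-1)!\,a^{-m}$, to get
\[
\mathbb{E}\{T^m\}=m\sum_{k\geq0}b_k(\vec\lambda,\vec\mu)\,\frac{(m-1)!}{(\lambda_k+\mu_k)^m}
=m!\sum_{k\geq0}b_k(\vec\lambda,\vec\mu)(\lambda_k+\mu_k)^{-m},
\]
which is \eqref{eq:EETm}; specialising to $m=1$ gives the first displayed formula for $\mathbb{E}\{T\}$. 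The termwise integration is exactly where the assumed convergence of $\sum_k b_k(\vec\lambda,\vec\mu)(\lambda_k+\mu_k)^{-m}$ is used: since the $b_k$ need not be of one sign and the series in \eqref{eq:FT} may converge only conditionally, I would legitimise the interchange by a dominated-convergence (or Abel summation) argument on the partial sums $\sum_{k=0}^K b_k\mathrm{e}^{-(\lambda_k+\mu_k)t}$, controlling the tails by the assumed convergent majorant after integration.

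It then remains to identify $\sum_k b_k(\vec\lambda,\vec\mu)(\lambda_k+\mu_k)^{-1}$ with $\sum_n\Lambda_n\Pi_n^{-1}$. For this I would substitute $b_k(\vec\lambda,\vec\mu)=\sum_{n\geq k}\Lambda_n\kappa_{n,k}(\overrightarrow{\lambda+\mu})$ from \eqref{def:bk} and swap the order of summation, reducing matters to the identity $\sum_{k=0}^n\kappa_{n,k}(\overrightarrow{\lambda+\mu})(\lambda_k+\mu_k)^{-1}=\prod_{k=0}^n(\lambda_k+\mu_k)^{-1}=\Pi_n^{-1}$. This inner sum is $\int_0^\infty a_n(t;\overrightarrow{\lambda+\mu})\,\mathrm{d} t$, which I would evaluate in one of two ways: either by recognising $\big(\prod_{k=0}^n(\lambda_k+\mu_k)\big)a_n(\cdot;\overrightarrow{\lambda+\mu})$ as the density of a hypoexponential law with rates $\lambda_0+\mu_0,\dots,\lambda_n+\mu_n$ (so it integrates to $1$), or directly from the partial-fraction identity $\sum_{k=0}^n\big(a_k\prod_{j\neq k}(a_j-a_k)\big)^{-1}=\prod_{k=0}^n a_k^{-1}$, which is a consequence of the Vandermonde relations \eqref{Vandermonde}, in the spirit of the computation that already yields $\sum_k(\lambda_k+\mu_k)b_k=\mu_0$ in the proof of the preceding Corollary. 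Summing over $n$ against the weights $\Lambda_n$ gives $\mathbb{E}\{T\}=\sum_n\Lambda_n\Pi_n^{-1}$, and its finiteness is exactly the stated hypothesis that $\sum_k b_k(\vec\lambda,\vec\mu)(\lambda_k+\mu_k)^{-1}$ converges; the rearrangement is licit once that series converges absolutely, and otherwise by the same partial-sum argument as above.

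The main obstacle throughout is purely the analytic bookkeeping: justifying the two interchanges (integral versus sum, and sum versus sum) when the coefficients $b_k$ are of mixed sign and the underlying series converge only conditionally. Everything else is the standard Laplace-transform evaluation of moments together with the Vandermonde identities that already underlie \eqref{eq:a} and \eqref{eq:FT}.
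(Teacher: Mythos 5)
Your proposal is correct and follows essentially the same route as the paper: the tail/integration-by-parts formula for $\mathbb E\{T^m\}$, termwise integration of the exponential series for $\overline F_T$ via $\int_0^\infty t^{m-1}\mathrm e^{-at}\,\mathrm dt=(m-1)!\,a^{-m}$, and the Vandermonde (partial-fraction) identity $\sum_{k=0}^n\kappa_{n,k}(\overrightarrow{\lambda+\mu})(\lambda_k+\mu_k)^{-1}=\Pi_n^{-1}$ to obtain $\sum_n\Lambda_n\Pi_n^{-1}$. You are in fact more careful than the paper about justifying the sum--integral and sum--sum interchanges, which the paper performs without comment.
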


\begin{proof}
Due to \eqref{eq:FT} and \eqref{eq:ETcond}
\[
\mathbb{E}\{T\}=\int_0^\infty  \overline F_T(t)\mathrm{d} t
=\sum_{k=0}^\infty b_k(\vec\lambda, \vec\mu)(\lambda_k+\mu_k)^{-1}
=\sum_{n=0}^\infty\Lambda_n\sum_{k=0}^n\kappa_{n, k}(\overrightarrow{\lambda+\mu})(\lambda_k+\mu_k)^{-1}.
\]
Notice that  by the Vandermonde properties, see \cite{Kuznetsov},     
\[
\sum_{k=0}^n \kappa_{n,k}(\overrightarrow{\lambda+\mu})(\lambda_k+\mu_k)^{-1}
=\Pi_n^{-1}.
\] 
Therefore 
\[
\mathbb{E}\{T\}=\sum_{n=0}^\infty\Lambda_n\Pi_n^{-1}
=\sum_{n=0}^\infty\left[\prod_{k=0}^{n-1}\frac{\lambda_k}{\lambda_k+\mu_k}\right]
\times\frac{1}{\lambda_n+\mu_n}.
\]

In general, if for some $m,\; m\geq1,$
\begin{equation*}
\lim_{t\to\infty}t^m \overline F_T(t)=0,
\end{equation*} 
then  integrating by parts we obtain
\begin{equation*}\begin{aligned}
\mathbb{E}\{T^m\}=&-\int_0^\infty t^m\mathrm{d} \overline F_T(t)
=m\int_0^\infty t^{m-1}\sum_{n=0}^\infty\Lambda_n
\sum_{k=0}^n\kappa_{n, k}(\overrightarrow{\lambda+\mu})\mathrm{e}^{-(\lambda_k+\mu_k)t}\mathrm{d} t\\
=&m\sum_{n=0}^\infty\Lambda_n\sum_{k=0}^n\kappa_{n,k}(\overrightarrow{\lambda+\mu})
\int_0^\infty t^{m-1}\mathrm{e}^{-(\lambda_k+\mu_k)t}\mathrm{d} t
=m!\sum_{n=0}^\infty\Lambda_n\sum_{k=0}^n\kappa_{n,k}(\overrightarrow{\lambda+\mu})
(\lambda_k+\mu_k)^{-m}\\
=&m!\sum_{k=0}^\infty b_k(\vec\lambda, \vec\mu)(\lambda_k+\mu_k)^{-m}<\infty.
\end{aligned}\end{equation*}
The latter equality follows by \eqref{def:bk}.
\end{proof}

We conclude the section with some examples.

\begin{ex}
\begin{description}
  \item[(a)] Let $\lambda_n=\mu_n=\frac12(n+1)^2,$
  that is $\sum_n\lambda_n^{-1}=\sum_{n}\mu_n^{-1}<\infty.$ By \eqref{def:kappa} and \eqref{def:bk} we have 
  \[
  \kappa_{n, k}(\overrightarrow{\lambda+\mu})=\left(\prod\limits_{j=0,\;j\neq k}^n[(j+1)^2-(k+1)^2]\right)^{-1}
  =(-1)^k\frac{2(k+1)\cdot(k+1)!}{k!(n-k)!(n+k+2)!},
  \]
  \[
  \Lambda_n=(n!)^22^{-n},\quad\Pi_n=[(n+1)!]^2,\quad
  b_k(\vec\lambda, \vec\mu)
  =(-1)^k2(k+1)^2\cdot\sum_{n=k}^\infty\frac{(n!)^2}{(n-k)!(n+k+2)!}2^{-n}<\infty
  \]
    and
    \[
    \mathbb E\{T\}=\sum_{n=0}^\infty\Lambda_n\Pi_n^{-1}=\sum_{n=0}^\infty\frac{2^{-n}}{(n+1)^2}<\infty.
    \]
  \item[(b)] Let $\lambda_n=n+1,\;\mu_n=1,$ 
  that is $\sum_{n}\lambda_n^{-1}=\infty,\;\sum_{n}\mu_n^{-1}=\infty,$
  \[
  \kappa_{n, k}=\frac{(-1)^k}{k!(n-k)!},\qquad \Lambda_n=n!,\quad\Pi_n=(n+2)!,\quad
  b_k(\vec\lambda, \vec\mu)=\sum_{n=k}^\infty n!\frac{(-1)^k}{k!(n-k)!}=\infty
  \]
  and
  \[
  \mathbb E\{T\}=\sum_{n=0}^\infty\Lambda_n\Pi_n^{-1}=\sum_{n=0}^\infty\frac{1}{(n+1)(n+2)}=1.
  \]
  \item[(c)]  Let $\lambda_n=1,\;\mu_n=\dfrac{1}{n+1},$ 
  that is $\sum_{n}\lambda_n^{-1}=\infty,\;\sum_{n}\mu_n^{-1}=\infty,$
  \[
  \kappa_{n, k}=\frac{(-1)^{n-k}(k+1)^{n-1}n!}{k!(n-k)!},\quad
  \Lambda_n=1,\quad\Pi_n=n+2,\quad b_k(\vec\lambda, \vec\mu)=\infty,\quad \mathbb E\{T\}=\infty.
  \]
\end{description}
\end{ex}

An example with a linearly increasing intensity $\mu_n$ is given in the next section.
%%%%%%%%%%%%%%%%%%%%%%%%%%%%%
%%%%%%%%%%%%%%%%%%%%%%%%%%%%%
%%%%%%%%%%%%%%%%%%%%%%%%%%%%%
\section{Example: 
 Poisson-modulated exponential  distributions with linearly increasing switching intensities}
 \label{Sec:damped}
\setcounter{equation}{0}

All the formulae can be simplified  and detailed 
in the case of a linear  increase in switching intensities, $\mu_n=\mu+n\nu$. 

\begin{theo}\label{theo:T}
Let  random variable $T$ has a Poisson-modulated exponential distribution
with a homogeneous underlying Poisson process, 
$\lambda_n\equiv\lambda,\; T\sim\mathrm{PoExp}(\lambda,\vec\mu),$
where $\vec\mu=\{\mu+n\nu\}_{n\geq0}$ with some $\nu,\;\nu>0.$

The cumulative distribution function of $T$ is given by 
\begin{equation}
\label{eq:FTHawkes}
F_T(t)=\left(
1-\mathrm{e}^{-\mu t-\lambda \mathcal A(t)}
\right)\mathbf{1}_{t\geq0}
\end{equation}
and  the moment generating function is
\begin{equation}
\label{eq:MGF}
\Psi(z):=\mathbb{E}[\mathrm{e}^{-zT}]
=1-z\mathrm{e}^{\lambda/\nu}\sum_{n=0}^\infty\frac{(-1)^n(\lambda/\nu)^n}{n!(\lambda+\mu+z+n\nu)}.
\end{equation}
\end{theo}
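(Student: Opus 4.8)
The plan is to read off the distribution function directly from the defining relation \eqref{eq:barF}--\eqref{def:xit} in the homogeneous case, and then obtain the moment generating function by Laplace transformation of the survivor function. Since here $N$ has constant rate $\lambda$ and $\mu_n=\mu+n\nu$, the accumulated intensity \eqref{def:xit} collapses to $\xi(t)=\int_0^t(\mu+\nu N(s))\,\mathrm ds=\mu t+\nu\int_0^tN(s)\,\mathrm ds$, so that
\[
\overline F_T(t)=\mathbb E\{\mathrm e^{-\xi(t)}\}=\mathrm e^{-\mu t}\,\mathbb E\{\exp(-\nu\textstyle\int_0^tN(s)\,\mathrm ds)\},
\]
and the whole problem reduces to evaluating the last expectation.

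To evaluate it I would condition on $\{N(t)=n\}$ and use that, on this event, the arrival epochs $S_1<\dots<S_n$ are distributed as the order statistics of $n$ i.i.d.\ $\mathrm{Uniform}(0,t)$ variables, whence $\int_0^tN(s)\,\mathrm ds=\sum_{i=1}^n(t-S_i)$ is equal in law to a sum of $n$ independent $\mathrm{Uniform}(0,t)$ variables. This gives
\[
\mathbb E\{\mathrm e^{-\nu\int_0^tN(s)\mathrm ds}\mathbf 1_{\{N(t)=n\}}\}
=\frac{(\lambda t)^n\mathrm e^{-\lambda t}}{n!}\left(\frac1t\int_0^t\mathrm e^{-\nu u}\,\mathrm du\right)^{\! n}
=\frac{\mathrm e^{-\lambda t}}{n!}\left(\frac{\lambda(1-\mathrm e^{-\nu t})}{\nu}\right)^{\! n},
\]
and summing over $n\geq0$ yields $\mathbb E\{\exp(-\nu\int_0^tN(s)\mathrm ds)\}=\exp(-\lambda t+\lambda(1-\mathrm e^{-\nu t})/\nu)$. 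Hence $\overline F_T(t)=\exp(-\mu t-\lambda t+\lambda(1-\mathrm e^{-\nu t})/\nu)=\exp(-\mu t-\lambda\mathcal A(t))$, with $\mathcal A(t)=\int_0^t(1-\mathrm e^{-\nu s})\,\mathrm ds=t-(1-\mathrm e^{-\nu t})/\nu$, which is \eqref{eq:FTHawkes}. As a cross-check one may instead specialise the Corollary above: all sums $\lambda+\mu+n\nu$ are distinct since $\nu>0$, and the linear case of the Remark gives $b_k(\vec\lambda,\vec\mu)=\mathrm e^{\lambda/\nu}(-\lambda/\nu)^k/k!$, so \eqref{eq:FT} becomes $\overline F_T(t)=\mathrm e^{\lambda/\nu}\mathrm e^{-(\lambda+\mu)t}\sum_{k\geq0}\frac{(-\lambda/\nu)^k}{k!}\mathrm e^{-k\nu t}=\mathrm e^{\lambda/\nu}\mathrm e^{-(\lambda+\mu)t}\exp(-(\lambda/\nu)\mathrm e^{-\nu t})$, the same function.

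For the moment generating function I would start from $\Psi(z)=\mathbb E[\mathrm e^{-zT}]=1-z\int_0^\infty\mathrm e^{-zt}\overline F_T(t)\,\mathrm dt$, valid for $z>0$ (and legitimate, $\overline F_T$ being a proper survivor function). Writing $\overline F_T(t)=\mathrm e^{\lambda/\nu}\mathrm e^{-(\lambda+\mu)t}\exp(-(\lambda/\nu)\mathrm e^{-\nu t})$ and expanding the last factor as $\sum_{n\geq0}\frac{(-\lambda/\nu)^n}{n!}\mathrm e^{-n\nu t}$, term-by-term integration gives
\[
\int_0^\infty\mathrm e^{-zt}\overline F_T(t)\,\mathrm dt
=\mathrm e^{\lambda/\nu}\sum_{n=0}^\infty\frac{(-\lambda/\nu)^n}{n!}\cdot\frac1{z+\lambda+\mu+n\nu},
\]
and substituting this into the identity for $\Psi$ produces \eqref{eq:MGF} at once.

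The only step that is not pure bookkeeping — and hence the main, if modest, obstacle — is justifying the interchange of summation and integration above, since the series alternates in sign. Here I would observe that for $z\geq0$ one has $z+\lambda+\mu+n\nu\geq\mu>0$, so $\sum_n\frac{(\lambda/\nu)^n}{n!}\,\frac1{z+\lambda+\mu+n\nu}\leq\mu^{-1}\mathrm e^{\lambda/\nu}<\infty$; this absolute convergence (uniform in $t$) makes Fubini applicable, and it also shows that the right-hand side of \eqref{eq:MGF} is analytic in $z$ on $\{\Re z>-(\lambda+\mu)\}$, so the identity persists there by analytic continuation.
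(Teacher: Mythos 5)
Your proposal is correct, and both of its halves reach the stated formulas by routes that differ from the paper's. For the distribution function the paper specialises its general Corollary: it computes $\kappa_{n,k}=\frac{(-1)^k}{k!(n-k)!}\nu^{-n}$, hence $b_k=\frac{(-1)^k}{k!}(\lambda/\nu)^k\mathrm e^{\lambda/\nu}$, and sums the series \eqref{eq:FT} into the exponential $\exp\bigl(\frac{\lambda}{\nu}(1-\mathrm e^{-\nu t})-(\lambda+\mu)t\bigr)$ --- which is exactly your ``cross-check''; your primary derivation via the order-statistics property of the homogeneous Poisson process (so that $\int_0^tN(s)\,\mathrm ds$ is a sum of $N(t)$ i.i.d.\ uniforms) is a genuinely different and more elementary probabilistic argument that does not rely on the Vandermonde machinery of Section 2. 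For the moment generating function the divergence is larger: the paper computes $\Psi(z)=\int_0^\infty\mathrm e^{-zt}\,\mathrm dF_T(t)=(\mu+\lambda)I(z)-\lambda I(z+\nu)$, identifies $I(z)$ with an incomplete gamma function $\gamma(b(z),\beta)/(\nu\beta^{b})$ via Gradshteyn--Ryzhik (3.381.1), collapses the two terms using the recurrence (8.356.1), and only then expands $\gamma$ in series (8.354.1) to obtain \eqref{eq:MGF}; you instead start from $\Psi(z)=1-z\int_0^\infty\mathrm e^{-zt}\overline F_T(t)\,\mathrm dt$, expand $\exp(-(\lambda/\nu)\mathrm e^{-\nu t})$ and integrate term by term, which is shorter, avoids special functions entirely, and comes with an explicit Tonelli/absolute-convergence justification that the paper leaves implicit. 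What the paper's detour buys is the closed incomplete-gamma expression for $\Psi$, which it reuses immediately afterwards for $\mathbb E\{T\}$ and the higher moments; what your route buys is self-containedness and a cleaner domain-of-validity statement ($\Re z>-(\lambda+\mu)$ by analytic continuation). Both are sound.
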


Here we denote 
$\mathcal A(t)=\int_0^t\alpha(u)\mathrm{d} u=t-\alpha(t)/\nu,\;t\geq0,$
where $\alpha(t)=1-\mathrm{e}^{-\nu t}$.

\begin{proof}
First, note that in this case
\begin{equation}\label{eq:kappa-ex}
\kappa_{n,k}=\left[\prod\limits_{j=0\;j\neq k}^n(j-k)\nu\right]^{-1}
=\frac{(-1)^k}{k!(n-k)!}\nu^{-n}
\end{equation}
and
\begin{equation}\label{eq:bk-ex}
b_k
=\sum_{n=k}^\infty\left(\lambda/\nu\right)^n\frac{(-1)^k}{k!}\frac{1}{(n-k)!}
=\frac{(-1)^k}{k!}\left(\lambda/\nu\right)^k\mathrm e^{\lambda/\nu}.
\end{equation}
By \eqref{eq:FT} and \eqref{eq:bk-ex}
\[
\overline F_T(t)=1-F_T(t)=\sum_{k=0}^\infty b_k\mathrm e^{-(\lambda_k+\mu_k)t}
=\mathrm e^{\lambda/\nu}\sum_{k=0}^\infty\frac{(-1)^k}{k!}
\left(\lambda/\nu\right)^k\mathrm e^{-(\lambda+\mu+k\nu)t}
=\exp\left(\frac{\lambda}{\nu}\left(1-\mathrm e^{-\nu t}\right)-(\lambda+\mu)t\right),
\]
which coincides with \eqref{eq:FTHawkes}.

Therefore,
\begin{equation}\label{eq:PsiI}
\begin{aligned}
\Psi(z)=&\int_0^\infty\mathrm{e}^{-zt}\mathrm{d} F_T(t)
=\int_0^\infty\left(\mu+\lambda\alpha(t)\right)\exp\left(-(\mu+z)t-\lambda\mathcal A(t)\right)\mathrm dt\\
=&(\mu+\lambda)I(z)-\lambda I(z+\nu),
\end{aligned}
\end{equation}
where 
\[
I(z)=\int_0^\infty\exp\left(-(\mu+z)t-\lambda\mathcal A(t)\right)\mathrm dt
=\frac{1}{\nu}\int_0^1x^{b-1}\mathrm e^{-\beta x}\mathrm dx,\qquad 
b=b(z)=\frac{\mu+z+\lambda}{\nu},\quad \beta =\frac{\lambda}{\nu}.
\]
Due to  \cite{GR}, formula (3.381.1), 
\begin{equation}
\label{eq:I}
I(z)=\frac{\gamma(b(z), \beta)}{\nu\beta^b}.
\end{equation}

Since $b(z+\nu)=b(z)+1$,  
by using \cite{GR}, formula (8.356.1), from \eqref{eq:PsiI} we get
\[
\Psi(z)=1-\frac{z\beta^{-b(z)}\mathrm{e}^\beta}{\nu}\gamma(b(z), \beta).
\]
Formula \eqref{eq:MGF}
follows by the series representation
of the incomplete gamma-function, see \cite{GR}, formula (8.354.1).
\end{proof}

\begin{rem}
The distribution of $T,$ given by \eqref{eq:FTHawkes} is unimodal, that is, 
the corresponding density function 
\begin{equation}
\label{eq:density}
f_T(t)=\left(
\mu+\lambda\alpha(t)
\right)\exp(-(\mu+\lambda) t+\lambda\alpha(t)/\nu),\qquad t>0,
\end{equation}
has a single maximum at  point $m,$
where $m=0$ if $\lambda\nu\leq \mu^2,$ and 
\[
m=\frac{1}{\nu}\ln\left[
\frac{\lambda}{\lambda+\mu}\left\{1+\frac{\nu}{2(\lambda+\mu)}
+\sqrt{\frac{\nu}{\lambda+\mu}\left(1+\frac{\nu}{4(\lambda+\mu)}\right)}\right\}
\right]
\]
if $\lambda\nu>\mu^2$.
See Figure \ref{fig3}.

\begin{figure}[ht]
\begin{center}
  \includegraphics[scale=0.8]{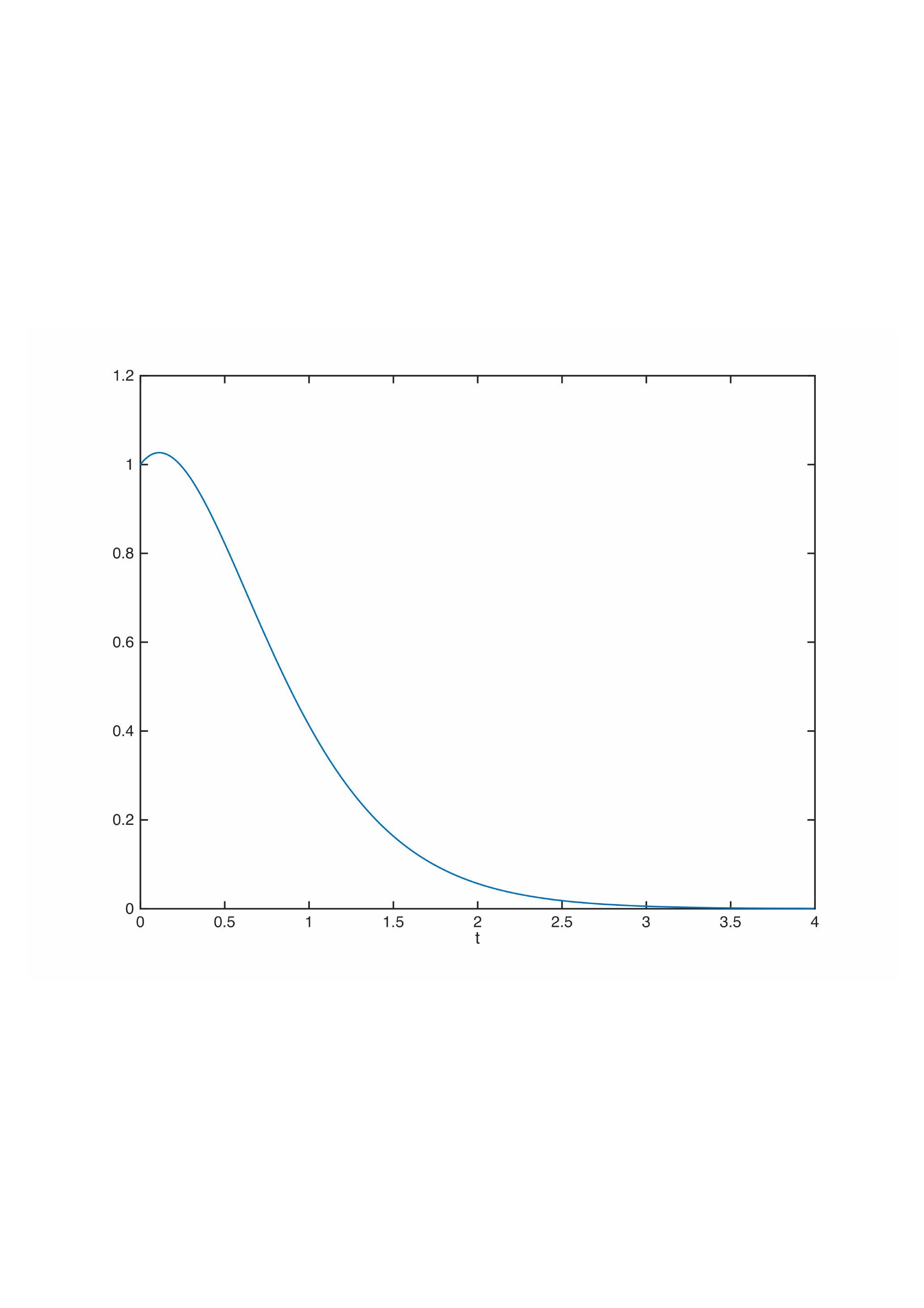}\\
 \caption{Density function the Poisson modulated distribution, (3.7), 
 with $\lambda=1.5,\; \mu=1,\;\nu=1$
  }\label{fig3}
 \end{center}
\end{figure} 
\end{rem}

The moments could be computed by differentiating the 
moment generating function $\Psi(z),$ which is given
 by  \eqref{eq:MGF},
\begin{equation}\label{eq:ETm-ex}
\mathbb{E}\{T^m\}=(-1)^m\Psi^{(m)}(0)
=\mathrm{e}^{\lambda/\nu}m!
\sum_{n=0}^\infty\frac{(-1)^n(\lambda/\nu)^n}{n!(\lambda+\mu+n\nu)^{m}},
\qquad m=1, 2,\ldots
\end{equation}
The same result follows from \eqref{eq:EETm} and \eqref{eq:bk-ex}. 

Formula \eqref{eq:ETm-ex} can be simplified: by 
\cite[(9.14)]{GR} 
using a 
generalised hypergeometric function \newline
$_mF_m(\alpha_1,\ldots,\alpha_m;\beta_1,\ldots, \beta_m;z),$
\begin{equation*} 
\mathbb{E}\{T^m\}=\frac{m!\mathrm{e}^{\lambda/\nu}}{(\lambda+\mu)^m}
\cdot {_mF_m}(a,\ldots,a; 1+a,\ldots,1+a;-\lambda/\nu),
\end{equation*}
where $a=(\lambda+\mu)/\nu$.
In particular, the mean of $T$ is given by
\[
\mathbb{E}\{T\}=\frac{\mathrm{e}^{\lambda/\nu}}{\lambda+\mu}\Phi\left(\frac{\lambda+\mu}{\nu}, 1+\frac{\lambda+\mu}{\nu}; -\lambda/\nu\right),
\]
where $\Phi$ is a confluent hypergeometric function, 
see \cite{GR}, formula (9.210). 
By \cite{GR}, formula (8.354.1),
   $\mathbb{E}\{T\}$ could be written in the equivalent form:
\[
\mathbb{E}\{T\}=\mathrm{e}^{\lambda/\nu}
\sum_{n=0}^\infty\frac{(-1)^n(\lambda/\nu)^n}{n!(\lambda+\mu+n\nu)}
=\frac{\mathrm{e}^{\lambda/\nu}}{\nu}\left(\frac{\lambda}{\nu}\right)^{-\frac{\lambda+\mu}{\nu}}
\gamma\left(\frac{\lambda+\mu}{\nu}, \frac{\lambda}{\nu}\right),
\]
where $\gamma$ is the incomplete gamma-function.

%%%%%%%%%%%%%%%%%%%%%%%%%%%%%%%%%%%%%%%%%%%%
%%%%%%%%%%%%%%%%%%%%%%%%%%%%%%%%%%%%%%%%%%%%
 
\section{Piecewise linear process with two alternating patterns and a double jump component}
\label{sec:Cox/Hawkes  processes}
\setcounter{equation}{0}

Let $N_{m}=N_{m}(t),\;m\geq0,$ 
be  the sequence of independent Poisson processes
which are driven by two alternating sequences of parameters:
 $\vec\lambda^{(0)}=\{\lambda^{(0)}_n\}_{n\geq0}$ 
and  $\vec\lambda^{(1)}=\{\lambda^{(1)}_n\}_{n\geq0}.$ 
That is, $N_{m}(t)\equiv N(t; \vec\lambda^{(\varepsilon_m)}),$ see the definition in \eqref{def:Nm}.
Here $\varepsilon_m$ is a sequence of alternating $0$ and $1:$ $(0, 1, 0, 1, 0, \ldots)$ or
$( 1, 0, 1, 0, \ldots ).$

Let $\{T_m\}_{m\geq1}$ 
be the sequence of independent 
positive random variables, 
$T_m\geq0,$  $m\geq1,$ and 
$M=M(t)$ be the associated process \eqref{def:M}, that counts arrivals of
$T^{(+,m)}:=T_1+\ldots+T_m,\;T^{(+,0)}=0,$  till time $t,\;t>0.$

Let $\varepsilon=\varepsilon(t)\in\{0, 1\}$ be 
the  process, which indicates the current state as follows:
$\varepsilon(t)=\varepsilon_m$ for $T^{(+,m)}\leq t<T^{(+,m+1)},$ $m\geq0$.

We assume that   random variable   $T_m$ has
Poisson-modulated exponential distribution, 
\[T_m\sim\mathrm{PoExp}(\vec\lambda^{(\varepsilon_{m-1})}, \vec\mu^{(\varepsilon_{m-1})}),\]
based on the   Poisson process $N_{m-1}=N_{m-1}(t),\;m\geq1.$
Here 
$\vec\mu^{(0)}=\{\mu^{(0)}_n\}_{n\geq0}$ 
and 
$\vec\mu^{(1)}=\{\mu^{(1)}_n\}_{n\geq0}$ 
are the two sequences of switching intensities (see \eqref{eq:barF}-\eqref{def:xit}).
The alternating survivor functions of $T_m,\;m\geq1,$ due to \eqref{1}
are given by
\[\begin{aligned}
\overline{F^{(i)}}(t)=1-F^{(i)}(t)=\mathbb{P}\{T_m>t\}&=\mathbb{E}\left\{
\exp\left(
-\int_0^t\mu_{N(s)}^{(i)}\mathrm{d} s
\right)
\right\}\\
&=\sum_{n=0}^\infty\Lambda_n^{(i)}
a_n(t;\overrightarrow{\lambda^{(i)}+\mu^{(i)}}),
\qquad t\geq0,
\end{aligned}\]
 and the corresponding densities are defined by 
 $f^{(i)}(t)=-\mathrm{d} \overline{F^{(i)}}(t)/\mathrm{d} t,\quad
i=\varepsilon_{m-1}\in\{0, 1\}.$

In this section 
we study a piecewise linear process which follows
two patterns alternating after the holding times $T_m$. %
Precisely,  we define  the piecewise linear process $\mathbb L$
based on the two sequences of tendencies, $\{c^{(0)}(n)\}_{n\geq0}$ and $\{c^{(1)}(n)\}_{n\geq0}$,
 alternating at the time instants $T^{(+, m)},$ such that
 \begin{equation}
\label{def:T}
\mathbb L(t) 
=\sum_{m=1}^{M(t)}l_{m-1}(T_m)+l_{M(t)}\left(t-T^{(+, M(t))}\right),
\qquad t\geq0,
\end{equation}
 where
 \begin{equation}
\label{def:lm}
l_m(t)=\int_0^tc^{(\varepsilon_m)}(N_m(s))\mathrm{d}  s,\qquad m\geq0, t\geq0.
\end{equation}

This definition coincides with \eqref{def:L}.   A simulated sample path is presented by
Figure \ref{fig1}.

\begin{figure}[ht]
\begin{center}
  \includegraphics[scale=1.5]{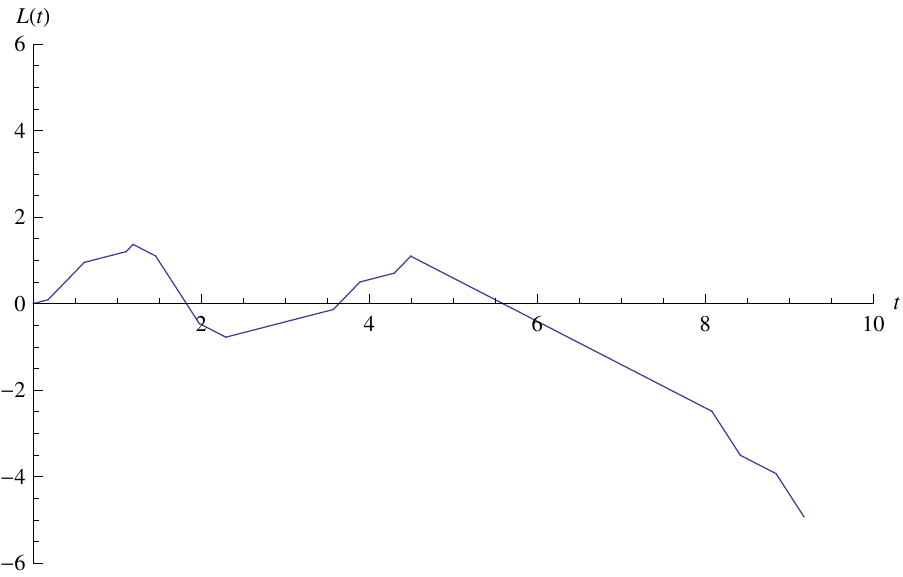}\\
 \caption{A sample path the piecewise linear process
 with two patterns of two pairs of alternating velocities. 
The 0-pattern with $c=0.5$ and 2.0; the 1-pattern with 
$c=-1.0$ and -3.0. In both cases, the inter-switching times 
are Poisson-modulated with $\lambda_n=1.5;\;\mu_n=1+n$
  }\label{fig1}
 \end{center}
\end{figure} 

The jump component added to this process consists of two parts.
The first one calculates the jumps occurring at the arrival times 
of the embedded Poisson processes. This corresponds to 
the case that each tendency switching (inside the time interval  $[T^{(+,m-1)},  \;T^{(+, m)})$) is 
accompanied with a jump of the magnitude $r_m(\cdot)$.
The 
compound Poisson processes
$j_m(t),\;t\geq0,$ 
\begin{equation*}
j_m(t)=\sum_{n=1}^{N_{m}(t)}r_m(n-1),\qquad m\geq0,
\end{equation*} 
presents the summed jump component.
Here $\{r_m(n)\},\;n\geq0, m\geq0,$ 
are independent random jump amplitudes, independent of counting process $N_m$.
Assume that the distributions of jumps are alternating,
such that the processes $j_m$ with even (odd)   $m$ 
are identically distributed.
Let
\begin{equation}
\label{def:j}
j(t)=\sum_{m=1}^{M(t)}j_{m-1}(T_m)+j_{M(t)}\left(t-T^{(+,M(t))}\right).
\end{equation}
counts the total number of this type of jumps.

 The second jump part is defined by  jumps occurring 
 at times $T^{(+,m)},\;m\geq1,$
 when   
 the pattern changes.
We assume that the jump amplitudes depend on the number of interventions 
$N_{m-1}(T_m),$
during the elapsed time $T_m,$
\begin{equation}
\label{def:J}
J(t)=\sum_{m=1}^{M(t)}R_{m-1}\left(N_{m-1}(T_m)\right).
\end{equation}
Here   independent  random  variables $\{R_m(n)\}$ 
are the jump magnitudes, which are independent of $T_m,$
$N_{m}$  and
$\{r_m(n)\},$ $n\geq0,$  $ m\geq0$.
Assume that  $R_m(\cdot)$ are of the alternating distributions.

Summarising, the jump component $\mathbb J(t)$
is defined by
\begin{equation}
\label{def:JJ}
\mathbb J(t)=j(t)+J(t)
=\sum_{m=1}^{M(t)}\left[
R_{m-1}\left(N_{m-1}(T_m)\right)+j_{m-1}(T_m)
\right]
+j_{M(t)}(t-T^{(+, M(t))}),\qquad t>0.
\end{equation}
Process $\mathbb J=\mathbb J(t)$ is an alternating renewal %pure jump 
process, see e.g.   \cite{Cox-renewal}. The behaviour of the paths of such a process 
$X(t):=\mathbb L(t)+\mathbb J(t)$ is illustrated by Figure \ref{fig2}.

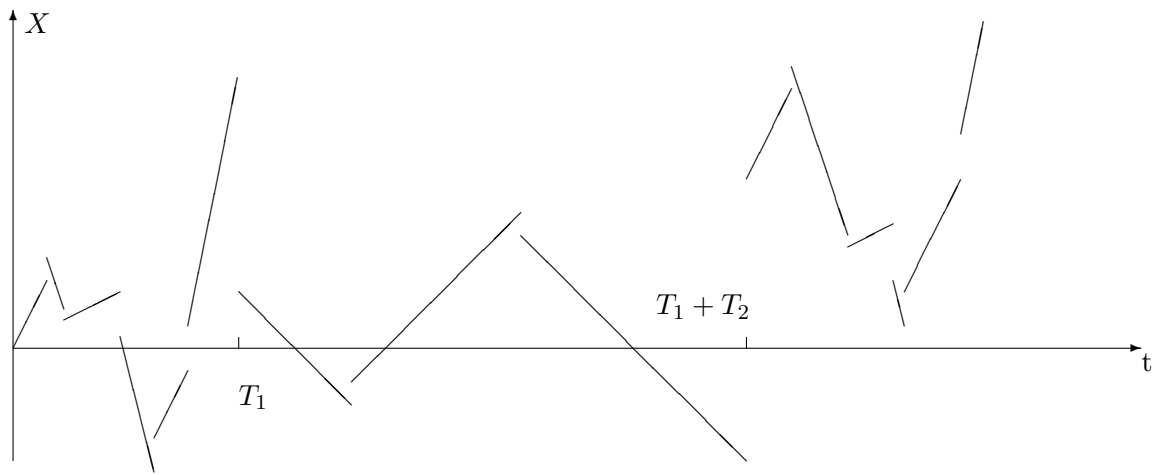
\begin{figure}[ht]
\setlength{\unitlength}{1.5cm}
\begin{picture}(10,4)(0,0)

\put(0,1){\vector(1,0){10}} \put(10,0.8){\makebox{t}}
\put(0.1,3.8){\makebox{$X$}} \put(0,0){\vector(0,1){4}}

\put(0,1){\line(1,2){0.3}}
\put(0.3,1.8){\line(1,-3){0.15}}
\put(0.45,1.25){\line(2,1){0.5}}
\put(0.95,1.1){\line(1,-4){0.3}}
\put(1.25,0.2){\line(1,2){0.3}}
\put(1.55,1.2){\line(1,5){0.44}}
\put(2,0.5){\makebox{$T_1$}}
\put(2,1){\line(0,1){0.1}}

\put(2,1.5){\line(1,-1){1}}

 \put(3,0.7){\line(1,1){1.5}}

\put(4.5,2){\line(1,-1){2}} 

\put(6.5,2.5){\line(1,2){0.4}} 
\put(6.9,3.5){\line(1,-3){0.5}} 
\put(7.4,1.9){\line(2,1){0.4}} 
\put(7.8,1.6){\line(1,-4){0.1}}

\put(7.9,1.5){\line(1,2){0.5}}
\put(8.4,2.9){\line(1,5){0.2}}

\put(6.5,1){\line(0,1){0.1}}
\put(5.7,1.3){\makebox{$T_1+T_2$}} 
\end{picture}
\caption{A sample path of the piecewise linear process with two alternating patterns and jumps}\label{fig2}
\end{figure}

In this paper we study the piecewise linear process $\mathbb L(t)$ accompanied
with the double  jump component 
$\mathbb J(t),$  defined by  \eqref{def:j}-\eqref{def:JJ}. Process
$X(t):=\mathbb L(t)+\mathbb J(t)$  successively passes through
the alternating states 
 determined by the two sets of parameters
\begin{equation}\label{def:sigma}
\sigma^{(i)}:=\langle
\vec c^{(i)},\;\vec r^{(i)},\; \vec R^{(i)},\;\vec\mu^{(i)},\;\vec\lambda^{(i)}\rangle,
\qquad i\in\{0, 1\}.
\end{equation}

Jump processes $j(t)$ and $J(t)$ are of different nature. Under the given trend $c=c^{(i)}(n)$
the jump $j(t)$ with the magnitude $r^{(i)}(n)$ always
occurs just after each tendency switching, while 
the jump with magnitude $R^{(i)}(n)$
occurs  in the case when the process changes the pattern,  \eqref{def:J}.

We analyse the expectation of $X(t)$.
Denote by
\[
\overline{R^{(i)} (n)}:=\mathbb{E}\{R^{(i)} (n)\},
\qquad \overline{r^{(i)} (n)}:=\mathbb{E}\{r^{(i)}(n)\},\quad m\geq0,\;n\geq0,\; i=\varepsilon_m,
\]
the expectations of jump values,
alternating with respect to $m,$ that is $\overline{R_m (\cdot)}$ and $\overline{r_m (\cdot)}$.

Denote   
\begin{equation}\label{def:rho}
\rho(n)=\rho_n^{(i)}:=\sum\limits_{k=0}^{n-1}\overline{r^{(i)}(k)},\quad n\geq 1,\qquad\rho(0)=0.
\end{equation}
Assume that for  both the states, $i\in\{0, 1\},$ 
the following series converge:     
\begin{equation}
\label{convergence:Deltak}
\begin{aligned}
\sum_{n=k}^\infty\rho(n)\Lambda_n\kappa_{n,k}(\vec\mu)<\infty,\\
\sum_{n=k}^\infty\left(c(n)+\mu_n\overline{R(n)}\right)\Lambda_n\kappa_{n,k}(\vec\mu)<\infty,
\end{aligned}
\qquad k\geq0.
\end{equation}  
This condition  extends condition \eqref{def:bk} 
fixing   relations between 
the sets of   ``observable'' parameters $c(n),\; \overline{R(n)},$  $\;\rho(n),\;n\geq0,$ 
and ``hidden'' intensity parameters
$\vec\lambda,\;\vec\mu$. 
Similarly to Eq. \eqref{def:bk}, conditions \eqref{convergence:Deltak} 
are sufficient for ``finite accumulation of jumps" at any finite time interval.

\begin{theo}
Let condition \eqref{convergence:Deltak} hold.

The \textup{(}conditional\textup{)} expectations 
$\mathfrak{M}_i
=\mathfrak{M}_i(t)
=\mathbb{E}\{\mathbb L(t)+\mathbb J(t)~|~\varepsilon(0)=i\},$
under the given initial set of parameters $\sigma$
solve the following coupled integral equations\textup{:}
\begin{equation}\label{eq:frakM}
\begin{aligned}
\mathfrak{M}_0(t)&
=\mathfrak m(t~|~\sigma^{(0)})
+\int_0^tf_T^{(0)}(u)\mathfrak{M}_1(t-u)\mathrm{d} u,\\
\mathfrak{M}_1(t)&
=\mathfrak m(t~|~\sigma^{(1)})
+\int_0^tf_T^{(1)}(u)\mathfrak{M}_0(t-u)\mathrm{d} u,
\end{aligned}\qquad t>0.\end{equation}
 Here $f_T^{(0)}$ and $f_T^{(1)}$ are the density functions of the 
 alternating distributions of $T_m,$ see \eqref{eq:fT1};
 function $\mathfrak m(t~|~\sigma)$
  is defined by the initial state $\sigma=\langle
c,\;\vec r,\;\vec R,\vec\mu,\;\vec\lambda
\rangle$ as follows\textup{:}
\begin{equation}
\label{def:m}
\mathfrak m(t~|~\sigma)
=\sum_{k=0}^\infty\frac{1-\mathrm{e}^{-(\lambda_k+\mu_k)t}}{\lambda_k+\mu_k}
\sum_{n=k}^\infty\Delta(n)\Lambda_n\kappa_{n,k}(\overrightarrow{\lambda+\mu}),
\qquad t\geq0,
\end{equation}
where $\Delta(n)=\Delta(n~|~\sigma)
=c(n)+\lambda_n\overline{r(n)}+\mu_n\overline{R(n)},\;n\geq0.$
\end{theo}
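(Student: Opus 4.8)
The plan is to condition on the first pattern‑switching instant $T_1$ and to use the regenerative structure of $X(t)=\mathbb L(t)+\mathbb J(t)$. Suppose $\varepsilon(0)=i$. On $[0,T_1)$ the process is governed by the state $\sigma^{(i)}$: it is driven by the Poisson process $N_0=N(\cdot;\vec\lambda^{(i)})$, follows the tendency $c^{(i)}$, receives the jumps $r^{(i)}$ at the arrivals of $N_0$, and the holding time $T_1\sim\mathrm{PoExp}(\vec\lambda^{(i)},\vec\mu^{(i)})$ is built from $N_0$ as in \eqref{def:xit}. On $\{T_1>t\}$ the whole contribution to $X(t)$ comes from this first pattern and equals $l_0(t)+j_0(t)$; on $\{T_1=u\le t\}$ the first pattern contributes $l_0(u)+j_0(u)$ together with the pattern‑change jump $R_0(N_0(u))$, while the remaining part of $X(t)$ is, by the construction \eqref{def:T}--\eqref{def:JJ}, an independent copy of the whole process started in state $1-i$ and run for time $t-u$ --- the objects $(N_m,T_{m+1},r_m,R_m)_{m\ge1}$ making up this remainder are independent of $(N_0,T_1,r_0,R_0)$, and shifting indices turns $(\varepsilon_1,\varepsilon_2,\dots)$ into a fresh alternating sequence starting at $1-i$. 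Taking expectations, with $f_T^{(i)}$ the density of $T_1$, gives the renewal identity
\[
\mathfrak M_i(t)=\mathbb E\big[l_0(t\wedge T_1)+j_0(t\wedge T_1)+R_0(N_0(T_1))\mathbf 1_{\{T_1\le t\}}\big]+\int_0^t f_T^{(i)}(u)\,\mathfrak M_{1-i}(t-u)\,\mathrm{d}u ,
\]
so it remains to identify the first expectation with $\mathfrak m(t\mid\sigma^{(i)})$.

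To this end I would write $\phi_n(u):=\mathbb P\{N_0(u)=n,\ T_1>u\}$. By Proposition \ref{prop-densities}, formula \eqref{1}, $\phi_n(u)=\Lambda_n^{(i)}a_n(u;\overrightarrow{\lambda^{(i)}+\mu^{(i)}})$, and by \eqref{2} the joint law of the switch is $\mathbb P\{T_1\in\mathrm{d}u,\ N_0(u)=n\}=\mu_n^{(i)}\phi_n(u)\,\mathrm{d}u$. The drift part is handled by Fubini, $\mathbb E[l_0(t\wedge T_1)]=\int_0^t\mathbb E[c^{(i)}(N_0(u))\mathbf 1_{\{T_1>u\}}]\,\mathrm{d}u=\int_0^t\sum_n c^{(i)}(n)\phi_n(u)\,\mathrm{d}u$, and the pattern‑change jump by $\mathbb E[R_0(N_0(T_1))\mathbf 1_{\{T_1\le t\}}]=\int_0^t\sum_n\mu_n^{(i)}\overline{R^{(i)}(n)}\,\phi_n(u)\,\mathrm{d}u$. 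Using $\int_0^t a_n(u;\overrightarrow{\lambda+\mu})\,\mathrm{d}u=\sum_{k=0}^n\kappa_{n,k}(\overrightarrow{\lambda+\mu})(1-\mathrm e^{-(\lambda_k+\mu_k)t})/(\lambda_k+\mu_k)$ and rearranging the resulting double series, these two contributions already reproduce the $c^{(i)}(n)$ and $\mu_n^{(i)}\overline{R^{(i)}(n)}$ parts of $\Delta(n)$ in \eqref{def:m}.

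The delicate term is $\mathbb E[j_0(t\wedge T_1)]$, and I expect this to be the main obstacle. Since the amplitudes $r^{(i)}(\cdot)$ are independent of $(N_0,T_1)$, conditioning on $N_0$ replaces $j_0$ by its mean: $\mathbb E[j_0(t\wedge T_1)]=\mathbb E[\rho(N_0(t\wedge T_1))]$ with $\rho$ as in \eqref{def:rho}. Splitting on $\{T_1>t\}$ and $\{T_1\le t\}$,
\[
\mathbb E[\rho(N_0(t\wedge T_1))]=\sum_n\rho(n)\phi_n(t)+\int_0^t\sum_n\mu_n^{(i)}\rho(n)\phi_n(u)\,\mathrm{d}u .
\]
Now $\phi_n$ satisfies the forward Kolmogorov equation for the pure‑birth process $N_0$ killed at rate $\mu_n^{(i)}$ in state $n$, namely $\dot\phi_n=-(\lambda_n^{(i)}+\mu_n^{(i)})\phi_n+\lambda_{n-1}^{(i)}\phi_{n-1}$, $\phi_{-1}\equiv0$, $\phi_n(0)=\mathbf 1_{\{n=0\}}$ (this can also be read off directly from \eqref{def:a}--\eqref{def:kappa} and \eqref{eq:a}). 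Hence, using the Abel‑type identity $\rho(n+1)-\rho(n)=\overline{r^{(i)}(n)}$,
\[
\frac{\mathrm d}{\mathrm d t}\sum_n\rho(n)\phi_n(t)=\sum_n\big(\lambda_n^{(i)}\overline{r^{(i)}(n)}-\mu_n^{(i)}\rho(n)\big)\phi_n(t) .
\]
Integrating on $[0,t]$ (the boundary term vanishes because $\rho(0)=0$) and substituting back, the $\mu_n^{(i)}\rho(n)$ integrals cancel and $\mathbb E[j_0(t\wedge T_1)]=\int_0^t\sum_n\lambda_n^{(i)}\overline{r^{(i)}(n)}\,\phi_n(u)\,\mathrm{d}u$ --- precisely the $\lambda_n^{(i)}\overline{r^{(i)}(n)}$ part of $\Delta(n)$. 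Adding the three pieces gives $\int_0^t\sum_n\Delta(n\mid\sigma^{(i)})\phi_n(u)\,\mathrm{d}u$, and expanding $\phi_n$ and integrating term by term yields exactly \eqref{def:m}; by symmetry in $0\leftrightarrow1$ one gets both equations of \eqref{eq:frakM}.

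The role of condition \eqref{convergence:Deltak} is to make all these manipulations legitimate: it guarantees that $\mathbb L(t)+\mathbb J(t)$ is integrable with finite accumulation of jumps on $[0,t]$, it justifies the uses of Fubini's theorem and the interchange of $\mathrm{d}/\mathrm{d}t$ with the infinite sum above, and it ensures absolute convergence of the rearranged double series defining $\mathfrak m(t\mid\sigma)$. The technical heart of the argument is thus the bookkeeping in the $j_0$‑term --- in particular the Abel summation converting the partial sums $\rho(n)$ back into the local quantities $\lambda_n^{(i)}\overline{r^{(i)}(n)}$, after which the two $\mu_n^{(i)}\rho(n)$ integrals conveniently cancel.
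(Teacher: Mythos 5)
Your proposal is correct and follows the same overall skeleton as the paper's proof: condition on the first pattern switch $T_1$, use the regenerative structure to produce the convolution term $\int_0^t f_T^{(i)}(u)\mathfrak M_{1-i}(t-u)\,\mathrm du$, and identify the first-cycle expectation with $\mathfrak m(t\mid\sigma^{(i)})$. Where you genuinely diverge is in the key algebraic step that turns the partial sums $\rho(n)$ into the local quantity $\lambda_n\overline{r(n)}$ appearing in $\Delta(n)$. The paper first expands everything into the exponential series $\Lambda_n\sum_k\kappa_{n,k}(\overrightarrow{\lambda+\mu})\mathrm e^{-(\lambda_k+\mu_k)t}$ and then performs an Abel summation on the coefficient arrays, using the identity $\kappa_{n-1,k}=\kappa_{n,k}\bigl((\lambda_n+\mu_n)-(\lambda_k+\mu_k)\bigr)$ together with $\Lambda_{n+1}=\lambda_n\Lambda_n$ (its display \eqref{def:m11}), and finally fixes the additive constant via $\mathfrak m(0\mid\sigma)=0$. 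You instead work at the level of the sub-probabilities $\phi_n(u)=\mathbb P\{N_0(u)=n,\,T_1>u\}=\Lambda_n a_n(u;\overrightarrow{\lambda+\mu})$, invoke the forward Kolmogorov system $\dot\phi_n=-(\lambda_n+\mu_n)\phi_n+\lambda_{n-1}\phi_{n-1}$, and integrate in time so that the two $\mu_n\rho(n)$ contributions cancel; the boundary term vanishes because $\rho(0)=0$. The two manipulations are equivalent (the same telescoping identity underlies both), but your version is more probabilistic and transparent, avoids the separate normalization step, and lands directly on $\int_0^t\sum_n\Delta(n)\phi_n(u)\,\mathrm du$, from which \eqref{def:m} follows by termwise integration of $a_n$; your treatment of the drift term via Fubini is also cleaner than the paper's differentiation of $\mathbb E_0[\mathbb L(t)\mathbf 1_{T>t}]$ in \eqref{eq:dEL}, which tacitly uses a conditional expectation given $T=t$. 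The price is that you must justify interchanging $\mathrm d/\mathrm dt$ with the infinite sum $\sum_n\rho(n)\phi_n(t)$, which you correctly charge to condition \eqref{convergence:Deltak} — the paper's rearrangements of double series require the same hypothesis, so neither route is more demanding.
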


\begin{proof} 
Let  $\varepsilon_0=0$.

The following equality in law holds:
\[
\begin{aligned}
    \left[\mathbb L(t)+\mathbb J(t)\right]|_{\varepsilon(0)=0}& =\left[l_0(t)+j_0(t)\right]\mathbf 1_{\{T>t\}} 
    +\Big\{\left[l_0(T)+j_0(T)+R_0(N_0(T))\right] \\
    +&\left[ \mathbb L(t-T)+\mathbb J(t-T)\right]\Big\}|_{\varepsilon(0)=1}\mathbf 1_{\{T\leq t\}}. 
\end{aligned}
\]
Here $T=T_1$ is the time of the first pattern's switching.
 
 Fix the initial state  $\sigma=\langle
c,\;\vec{\overline{r}},\;\vec{\overline{ R}}, \vec\mu,\;\vec\lambda
\rangle.$ 
One can write
\begin{equation}\label{eq:K}
\begin{aligned}
\mathfrak{M}_0(t)  
=\mathbb E_0\left\{l_0(t)+j_0(t)\right\}\mathbf{1}_{\{T>t\}}
+&\int_0^t f_T^{(0)}(s)\mathbb{E}_0\left\{
l_0(s)+j_0(s)+R_0(N_0(s))\right\}\mathrm{d} s
\\
+&\int_0^tf_T^{(0)}(s)\mathfrak M_1(t-s)\mathrm{d} s.
\end{aligned}\end{equation}
(all parameters are of state $\varepsilon=0;$  
$\mathbb{E}_0$ is the conditional expectation with respect to the conditional probability
$\mathbb{P}_0\{\cdot\}:=\mathbb{P}\{\cdot~|~\varepsilon(0)=0\}$). 
The equation for $\mathfrak M_1(\cdot)$ is similar.

By using  \eqref{def:bk}, \eqref{1}-\eqref{2} 
due to \eqref{def:rho}
one can obtain
\begin{equation}\label{eq:Ej}
\mathbb{E}_0\left[j(t)\cdot\mathbf{1}_{\{T>t\}}\right]
=\sum_{n=1}^\infty\Lambda_n\rho(n)a_n(t, \overrightarrow{\lambda+\mu}),
\end{equation}
\begin{equation}\label{eq:ERj}
\int_0^t f_T^{(0)}(s)\mathbb{E}_0\left\{
j_0(s)+R_0(N_0(s))\right\}\mathrm{d} s
=\sum_{n=0}^\infty\left(\rho(n)+\overline{ R(n)}\right)\mu_n\Lambda_n
\int_0^ta_n(s; \overrightarrow{\lambda+\mu})\mathrm{d} s
\end{equation}

Further, 
\begin{equation}
\label{eq:dEL}\begin{aligned}
\frac{\mathrm{d}}{\mathrm{d} t}\mathbb{E}_0\left(
\mathbb L(t)\mathbf{1}_{T>t}
\right)
=&\sum_{n=0}^\infty\mathbb{E}_0\left(\frac{\mathrm{d} {\mathbb L}}{\mathrm{d} t}(t)\mathbf{1}_{N(t)=n}~|~T>t\right)
\mathbb{P}_0\{T>t\}
+\mathbb{E}_0\left(
\mathbb L(t)\cdot\frac{\mathrm{d}\mathbf{1}_{T>t}}{\mathrm{d} t}
\right)\\
=&\sum_{n=0}^\infty c^{(0)}(n)\mathbb{P}_0\{T>t,\;N(t)=n\}
-f_T^{(0)}(t)\mathbb{E}_0\left(
\mathbb L(t)\right).
\end{aligned}\end{equation}

Integrating \eqref{eq:dEL} by  \eqref{1} we get
\begin{equation*}
\begin{aligned}
\mathbb{E}_0\left(
\mathbb L(t)\mathbf{1}_{T>t}
\right)
=\sum_{n=0}^\infty c^{(0)}(n)\Lambda_n\int_0^ta_n(s; \overrightarrow{\lambda+\mu}) \mathrm{d} s
-\int_0^tf_T^{(0)}(s)\mathbb{E}_0\left(\mathbb L(s)\right)\mathrm{d} s.
\end{aligned}\end{equation*}
Therefore,
\begin{equation}\label{eq:ll}
\mathbb E_0\left\{l_0(t)\mathbf{1}_{\{T>t\}}\right\}
+\int_0^tf_T^{(0)}(s)\mathbb{E}_0\left(l_0(s)\right)\mathrm{d} s
=\sum_{n=0}^\infty c^{(0)}(n)\Lambda_n\int_0^ta_n(s; \overrightarrow{\lambda+\mu})\mathrm{d} s.
\end{equation}

Summing up \eqref{eq:Ej}, \eqref{eq:ERj} and \eqref{eq:ll} by using \eqref{eq:K}
we get equation \eqref{eq:frakM} with
\[
\mathfrak m(t~|~\sigma)
=\sum_{n=0}^\infty\rho(n)\Lambda_na_{n}(t; \overrightarrow{\lambda+\mu})
+\sum_{n=0}^\infty\Lambda_n\left(
(\overline{R(n)}+\rho(n))\mu_n+c(n)
\right)\int_0^ta_n(s; \overrightarrow{\lambda+\mu})\mathrm{d} s.
\]
for both the states $\sigma$.   

To complete the proof, we should convert the last expression into the form of  \eqref{def:m}.
Recalling \eqref{def:a},
function $\mathfrak m(t~|~\sigma)$  takes the form
$\mathfrak m(t~|~\sigma)=\mathfrak m_0+\mathfrak m_1(t),$
where $\mathfrak m_0$ is constant, 
\begin{equation*}
\mathfrak m_0=
\sum_{n=0}^\infty\Lambda_n\left(c(n)+\left(\overline{ R(n)}+\rho(n)\right)\mu_n\right)
\sum_{k=0}^n(\lambda_k+\mu_k)^{-1}\kappa_{n,k}(\overrightarrow{\lambda+\mu})
\end{equation*}
and
\begin{equation}\label{def:m1}
\begin{aligned}
\mathfrak m_1(t)&=\sum_{k=0}^\infty\mathrm{e}^{-(\lambda_k+\mu_k)t}\sum_{n=k}^\infty
\Lambda_n\kappa_{n,k}(\overrightarrow{\lambda+\mu})\left[
\rho(n)-(\lambda_k+\mu_k)^{-1}\left(c(n)+\left(\overline{ R(n)}+\rho(n)\right)\mu_n\right)
\right]\\
&=\sum_{k=0}^\infty\frac{\mathrm{e}^{-(\lambda_k+\mu_k)t}}{\lambda_k+\mu_k}
\sum_{n=k}^\infty\Lambda_n\kappa_{n,k}(\overrightarrow{\lambda+\mu})\left[
(\lambda_k+\mu_k)\rho(n)-c(n)-\left(\overline{R(n)}+\rho(n)\right)\mu_n
\right].
\end{aligned}\end{equation}

Then, notice that
\begin{equation}\label{def:m11}\begin{aligned}
\sum_{n=k}^\infty&\Lambda_n\kappa_{n,k}(\overrightarrow{\lambda+\mu})\left[
(\lambda_k+\mu_k)\rho(n)-\rho(n)\mu_n\right]\\
=\sum_{n=k}^\infty&\Lambda_n\kappa_{n,k}(\overrightarrow{\lambda+\mu})\rho(n)\left[
(\lambda_k+\mu_k)-(\lambda_n+\mu_n)+\lambda_n\right]\\
=-\sum_{n=k+1}^\infty&\Lambda_n\kappa_{n-1, k}(\overrightarrow{\lambda+\mu})\rho(n)
+\sum_{n=k}^\infty\Lambda_n\kappa_{n,k}(\overrightarrow{\lambda+\mu})\rho(n)\lambda_n\\
=-\sum_{n=k}^\infty&\Lambda_{n+1}\kappa_{n,k}(\overrightarrow{\lambda+\mu})\overline{r(n)}
=-\sum_{n=k}^\infty\Lambda_{n}\kappa_{n,k}(\overrightarrow{\lambda+\mu})\lambda_n\overline{r(n)}.
\end{aligned}\end{equation}
From \eqref{def:m1}-\eqref{def:m11} we obtain
\begin{equation*}
\mathfrak m_1(t)=-\sum_{k=0}^\infty\frac{\mathrm{e}^{-(\lambda_k+\mu_k)t}}{\lambda_k+\mu_k}
\sum_{n=k}^\infty\Lambda_n\kappa_{n,k}(\overrightarrow{\lambda+\mu})
\left(c(n)+\lambda_n\overline{r(n)}+\mu_n\overline{R(n)}\right).
\end{equation*}

Note that by definition $\mathfrak m(t~|~\sigma)|_{t=0}=0$. Therefore 
$
\mathfrak m_0=-\mathfrak m_1(0)
$
and 
\begin{equation*}
\mathfrak m(t~|~\sigma)=-\mathfrak m_1(0)+\mathfrak m_1(t)
=\sum_{k=0}^\infty\frac{1-\mathrm{e}^{-(\lambda_k+\mu_k)t}}{\lambda_k+\mu_k}
\sum_{n=k}^\infty\Delta(n)\Lambda_n\kappa_{n,k}(\overrightarrow{\lambda+\mu}).
\end{equation*}
\end{proof}

Martingale condition \eqref{eq:mart} follows from equations
 \eqref{eq:frakM}.

\begin{theo}\label{theo:martingale}
Let condition \eqref{convergence:Deltak}
 holds.
 
Process $X=X(t)$ is a martingale if and only if 
for both states the parameters of the model satisfy
\begin{equation}
\label{eq:martingale-condition}
\Delta(n):=c(n)+\lambda_n \overline{r(n)}+\mu_n\overline{R(n)}=0.\qquad n\geq0.
\end{equation}
\end{theo}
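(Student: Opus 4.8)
My plan is to derive \eqref{eq:martingale-condition} directly from the coupled equations \eqref{eq:frakM} and the explicit form \eqref{def:m} of the inhomogeneous term $\mathfrak m(\cdot\mid\sigma)$. The point I would exploit is that $\mathfrak m$ is built \emph{only} out of the quantities $\Delta(n)$, so that $\mathfrak m(\cdot\mid\sigma^{(i)})\equiv0$ is equivalent, on one side, to $\Delta^{(i)}(n)=0$ for every $n$, and, on the other side, to the vanishing of the expected increments of $X$ started in pattern $i$. The two implications of the theorem then come from these two equivalences.

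For necessity, suppose $X$ is a martingale. Since $\mathbb L(0)=\mathbb J(0)=0$ and $\varepsilon(0)$ is $\mathcal F_0$-measurable, I get $\mathfrak M_i(t)=\mathbb E\{\mathbb E\{X(t)\mid\mathcal F_0\}\mid\varepsilon(0)=i\}=0$ for all $t\ge0$ and $i\in\{0,1\}$. Substituting $\mathfrak M_0\equiv\mathfrak M_1\equiv0$ into \eqref{eq:frakM} forces $\mathfrak m(t\mid\sigma^{(i)})\equiv0$. Differentiating \eqref{def:m} term by term (legitimate under \eqref{convergence:Deltak}) and swapping the order of summation as in the preceding proof, I obtain
\[
\frac{\mathrm d}{\mathrm dt}\,\mathfrak m(t\mid\sigma^{(i)})
=\sum_{n\ge0}\Delta^{(i)}(n)\,\Lambda_n^{(i)}\,a_n\bigl(t;\overrightarrow{\lambda^{(i)}+\mu^{(i)}}\bigr)\equiv0 .
\]
I would then evaluate this identity and its successive $t$-derivatives at $t=0$: by \eqref{eq:a} one has $a_n^{(m)}(0)=0$ for $0\le m\le n-1$ and $a_n^{(n)}(0)=1$ (with $a_0(0)=1$, $\Lambda_0^{(i)}=1$), so the value at $0$ gives $\Delta^{(i)}(0)=0$, the first derivative gives $\Delta^{(i)}(1)=0$, and inductively the $n$-th derivative gives $\Lambda_n^{(i)}\Delta^{(i)}(n)=0$, hence $\Delta^{(i)}(n)=0$. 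Doing this for $i=0,1$ yields \eqref{eq:martingale-condition}.

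For sufficiency, assume \eqref{eq:martingale-condition}. Then \eqref{def:m} gives $\mathfrak m(\cdot\mid\sigma^{(i)})\equiv0$, so \eqref{eq:frakM} becomes a homogeneous coupled system of Volterra convolution equations with the probability densities $f_T^{(0)},f_T^{(1)}$ as kernels; iterating it yields $\mathfrak M_0=(f_T^{(0)}*f_T^{(1)})^{*k}*\mathfrak M_0$ for every $k$, and since the mass of $(f_T^{(0)}*f_T^{(1)})^{*k}$ on any bounded interval tends to $0$ (non-explosiveness of the holding times), I conclude $\mathfrak M_0\equiv\mathfrak M_1\equiv0$, i.e.\ $\mathbb E\{X(t)\}=0$. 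To upgrade this to the martingale property I would condition on $\mathcal F_s$ for $0\le s\le t$: by the memorylessness of the Poisson-modulated exponential holding time of the ongoing pattern and the independent-increments structure of the embedded Poisson processes, the conditional law of $X(t)-X(s)$ given $\mathcal F_s$ depends on $\mathcal F_s$ only through $\varepsilon(s)$ and the current intervention count $n_0:=N_{M(s)}(s)$; re-running the derivation of \eqref{eq:frakM} with the embedded parameter sequences shifted to $(\lambda^{(i)}_{n_0+k})_{k\ge0}$, $(\mu^{(i)}_{n_0+k})_{k\ge0}$ gives the same homogeneous system (its convolution term still couples to $\mathfrak M_{1-i}\equiv0$, because $N$ restarts from $0$ at the next pattern change), the relevant inhomogeneous term now involving only the $\Delta^{(i)}(n)$ with $n\ge n_0$, which all vanish. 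Hence $\mathbb E\{X(t)-X(s)\mid\mathcal F_s\}=0$ and $X$ is a martingale.

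The main obstacle is precisely this last step — passing from the vanishing of the unconditional mean to the genuine martingale property. Equation \eqref{eq:frakM} is obtained from the canonical initial state (pattern $i$, no interventions yet), whereas at a generic time $s$ the ongoing pattern already carries $n_0\ge1$ interventions; it works out because the residual holding-time hazard in state $(i,n_0)$ is still $\mu^{(i)}_{n_0}$ and $N$ is reset at the following pattern change, so the shifted problem is literally the preceding theorem applied to a shifted parameter family, and one only has to record that the Vandermonde cancellations producing \eqref{def:m} are insensitive to the starting index. The remaining routine points — term-by-term differentiation in the necessity argument and the Volterra iteration in the sufficiency argument — are covered by \eqref{convergence:Deltak} together with finiteness of the jump means.
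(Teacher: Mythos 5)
Your argument is correct and follows essentially the same route as the paper: both reduce the martingale property to $\mathfrak m(\cdot\mid\sigma^{(i)})\equiv0$ via \eqref{eq:frakM} and then extract $\Delta(n)=0$ inductively from the Vandermonde identities --- your use of the derivative conditions \eqref{eq:a} at $t=0$ applied to $\sum_n\Delta(n)\Lambda_n a_n(t;\overrightarrow{\lambda+\mu})\equiv0$ is just the re-summed form of the paper's weighted sums $\sum_k(\lambda_k+\mu_k)^m\kappa_{n,k}(\overrightarrow{\lambda+\mu})$. The only substantive difference is that you spell out the Volterra iteration and the renewal/conditioning step upgrading $\mathbb E\{X(t)\}=0$ to the full martingale property, which the paper dispatches with the phrase ``by renewal character of the process.''
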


\begin{proof}
By renewal character of the process 
$X=X(t)$ is a martingale if and only if 
the expectations vanish, $\mathfrak M_i(t)\equiv0,\;i\in\{0, 1\},$
or, equivalently,
$\mathfrak m(t~|~\sigma)\equiv0$ for the both states, 
$\sigma\in\{\sigma^{(0)},\;\sigma^{(1)}\},$ see \eqref{eq:frakM}.
By \eqref{def:m} this is equivalent to
\begin{equation}
\label{eq:DD}
\sum_{n=k}^\infty\Lambda_n\Delta(n)
\kappa_{n,k}(\overrightarrow{\lambda+\mu})=0,\qquad
k=0,\;1,\;2,\ldots
\end{equation}

If  $\Delta(n)=0$ $\forall n,\;n\geq0,$ then \eqref{eq:DD} holds.

On the other hand, from \eqref{eq:DD} one can obtain
$\Delta(n)=0\;\forall n\geq0.$
Indeed, summing up these equations by using 
 Vandermonde properties \eqref{Vandermonde}
 we have
 \[
0= \sum_{k=0}^\infty\left[
 \sum_{n=k}^\infty\Lambda_n\Delta(n)
\kappa_{n,k}(\overrightarrow{\lambda+\mu})
 \right]
 =\sum_{n=0}^\infty\Delta(n)\Lambda_n
 \sum_{k=0}^n\kappa_{n,k}(\overrightarrow{\lambda+\mu})
 =\Delta(0)\kappa_{0,0}=\Delta(0).
 \]
Hence $\Delta(0)=0$. 

Then, we prove $\Delta(m)=0,\;m=1, 2, \ldots,$ for both the states by induction.
This follows 
by applying  \eqref{Vandermonde} to the sequential sums, $m=1, 2, \ldots,$
\[
0=\sum_{k=0}^\infty(\lambda_k+\mu_k)^m\left[
 \sum_{n=k}^\infty\Lambda_n\Delta(n)
\kappa_{n,k}(\overrightarrow{\lambda+\mu})
 \right]
 =\sum_{n=0}^\infty\Lambda_n\Delta(n)
 \sum_{k=0}^n(\lambda_k+\mu_k)^m\kappa_{n,k}(\overrightarrow{\lambda+\mu}).
\]
\end{proof}
%%%%%%%%%%%%%%%%%%%%%%%%%%%%%%%%%%%%

\begin{rem}
For some state of the process, let 
 the supports of jump amplitudes $r(n)$ and $R(n)$ and the tendency $c(n)$
be situated in the same semi-line. 
In this case\textup{,} by Theorem \textup{\ref{theo:martingale},} the process $X$ is not a martingale.

Precisely\textup{,} the equivalent martingale measure for 
process $X$ does not exist
in the following   cases\textup{:}  in some state of the process\textup{,} $i\in\{0, 1\}, \;\exists n$ 
\begin{equation}
\label{support-}
c(n)<0, \qquad\text{and}\qquad\mathrm{supp}\{r(n)\},\;\mathrm{supp}\{R(n)\}\subset(-\infty,\;0);
\end{equation}
 
\begin{equation}
\label{support+}
c(n)>0 \qquad\text{and}\qquad \mathrm{supp}\{r(n)\},\;\mathrm{supp}\{R(n)\}\subset(0,\;+\infty).
\end{equation}

The problem of  existence of equivalent martingale measures 
is discussed in the next section.
\end{rem}

\begin{rem}
Theorem \textup{\ref{theo:martingale}} seems natural.
For instance\textup{,} if the holding times $T_m$ are independent of $N_m$ and
exponentially distributed 
 with alternating parameters $\mu^{(i)},$
condition \eqref{eq:martingale-condition} 
\textup{(}with $\mu^{(i)}$ instead of $\mu_n$\textup{)} 
characterises a martingale for the similar piecewise linear process with double jump component\textup{,}
see  \textup{\cite{ALEA17},  Corollary 3.1.}
Moreover\textup{,}  condition \eqref{eq:martingale-condition}   is very similar
to the martingale condition for the simple jump-telegraph model, \eqref{eq:mart-classic}\textup{,}
see  \textup{\cite{Quant,LR,KR}.}

In \eqref{eq:martingale-condition}
the term $\lambda_n \overline{r(n)}$  corresponds to the correction of tendency $c(n),$
which is provoked by jumps occurring at each tendency switching\textup{,} whereas
the term $\mu_n \overline{R(n)}$ corresponds to
the jumps accompanying the changes of patterns.

\end{rem}

%%%%%%%%%%%%%%%%%%%%%%%%%%%%%%%%%%%%%%%%%
%%%%%%%%%%%%%%%%%%%%%%%%%%%%%%%%%%%%%%%%%
\section{Market model}\label{sec:market}
\setcounter{equation}{0}

We consider the financial market  model based on
the  piecewise linear stochastic process with jumps
$X=X(t)=\mathbb L(t)+\mathbb J(t),$ $t\geq0,$ which is
 defined  on the filtered probability  space $(\Omega, \mathcal F, \{\mathcal F_t\}_{t\geq0}, \mathbb{P})$
 by \eqref{def:T}, \eqref{def:j} and \eqref{def:J}.
The alternating states of the process $X$ are described by the two sets of parameters
\begin{equation*}
\langle
\vec  c^{(i)},\; \vec r_m,\; \vec R_m,\; \vec\mu^{(i)},\;\vec\lambda^{(i)}
\rangle,\qquad i\in\{0, 1\},\qquad m\geq0.
\end{equation*}
Here $\vec c^{(i)}=\{c^{(i)}(n)\}_{n\geq0},$
$\vec \mu^{(i)}=\{\mu^{(i)}_n\}_{n\geq0}$ and $\vec \lambda^{(i)}=\{\lambda^{(i)}_n\}_{n\geq0}$ 
are deterministic, $\mu^{(i)}_n, \lambda^{(i)}_n>0,\;i\in\{0, 1\}, n\geq0,$ and 
independent  random 
jump amplitudes $\vec r_m=\{ r_m(n)\}_{n\geq0}$ and $\vec R_m=\{ R_m(n)\}_{n\geq0}$
are greater then $-1,$
\[\mathrm{supp}\{r_m (n)\}\subset(-1,\;\infty),\qquad
\mathrm{supp}\{R_m(n)\}\subset(-1,\;\infty),\quad m\geq1,\;n\geq0.\]

Consider a market model of two risky primary assets,
stock and bond. 

The bond price is based on the continuous 
piecewise linear process 
\begin{equation*}
\mathbb Y(t)=\int_0^ty^{(\varepsilon(u))}\mathrm{d} u,\qquad t\geq0,
\end{equation*}
where $y^{(0)}\geq0$ and $y^{(1)}\geq0$ are continuously compounding interest rates
depending on the current market state. 
The bond price dynamics is defined by
\begin{equation}
\label{def:bond}
B(t)=\exp(\mathbb Y(t)),\qquad t\geq0.
\end{equation}

The stock price is defined by the stochastic exponential of $X$.
Precisely, denote
\begin{equation}
\label{def:zm}
z_m(t):=\prod_{n=1}^{N_m(t)}\left(1+r_m(n)\right),\qquad m\geq0,
\end{equation}
the stochastic exponential of the independent
compound Poisson process $j_m(t):=\sum_{n=1}^{N_m(t)}r_m(n)$.
The stock price is defined by 
\begin{equation}\label{def:stock}
S(t)=S_0\mathrm{e}^{\mathbb L(t)}
\prod_{m=1}^{M(t)}\left[
\left(1+R_{m-1}\left(N_{m-1}(T_m)\right)\right)z_{m-1}(T_{m})\right]\cdot z_{M(t)}\left(t-T^{(+,M(t))}\right).
\end{equation} 

Model based on  \eqref{def:bond}, \eqref{def:stock}
is characterised by the multiple sources of uncertainty, the Poisson processes $M$ 
and $N_m,$ $ m\geq0,$
 which make the model incomplete.  
Moreover, the model 
has discontinuities of unpredictable type and size. This
 makes the model essentially incomplete,  see  \cite{bardhan2}.

This model 
generalises the jump-telegraph model studied by \cite{Quant}.
See  \cite{KR} for the detailed presentation.

On the filtered probability  space 
$(\Omega, \mathcal F, \{\mathcal F_t\}_{t\geq0}, \mathbb{P})$ 
we define the equivalent measure 
$\mathbb{P}_*$ by means of the following Girsanov transform.

Consider the numerical (nonrandom) sequences 
$R^{(i)}_*(n),$  $r^{(i)}_*(n),$
such that $R^{(i)}_*(n),\;r^{(i)}_*(n)>-1,$ $n\geq0,\;i\in\{0, 1\}.$   Let 
\begin{equation}
\label{eq:girsanov}
c^{(i)}_*(n)=-\lambda_n^{(i)}r^{(i)}_*(n)-\mu_n^{(i)}R^{(i)}_*(n),
\qquad n\geq0,\; i\in\{0, 1\}.
\end{equation}

Define the piecewise linear process $\mathbb L_*$ 
with tendencies $c_*^{(i)}(\cdot)$ (as in Section \ref{sec:Cox/Hawkes  processes})
based on process $M=M(t),$
counting the states' switching, \eqref{def:T}-\eqref{def:lm},
\begin{equation*}
\mathbb L_*(t)=\int_0^tc_{*}^{(\varepsilon(s))}(N_{M(s)}(s))\mathrm{d} s
=\sum_{m=1}^{M(t)}l_{*,m-1}(T_m)+l_{*, M(t)}\left(t-T^{(+, M(t))}\right).
\end{equation*}
Here
\[
l_{*,m}(t)=\int_0^tc_*^{(\varepsilon_m)}(N_m(s))\mathrm{d} s,\qquad m\geq0.
\]

The  jump process $\mathbb J_*$   is defined by   
\begin{equation*}
j_{*,m}(t)=\sum_{n=1}^{N_m(t)}r_{*}^{(\varepsilon_{m-1})}(n),\qquad
j_*(t)=\sum_{m=1}^{M(t)}j_{*,m-1}(T_m)+j_{*,M(t)}(t-T^{(+, M(t))}),
\end{equation*}
and
\begin{equation*}
 J_*(t)
=
\sum_{m=1}^{M(t)}R^{(\varepsilon_{m-1})}_*(N_{m-1}(T_m)).
\end{equation*}
See \eqref{def:j}-\eqref{def:J}.

All processes  are based on the embedded state process $\varepsilon$ and 
$\varepsilon_m=\varepsilon(T^{(+,m)}).$ Due to \eqref{eq:girsanov} 
by Theorem \ref{theo:martingale}     
the sum $\mathbb L_*(t)+\mathbb J_*(t),\;t\geq0,$ is a martingale.

Let $Z(t)=\mathcal E_t(\mathbb L_*+\mathbb J_*),\; t>0.$
On the filtered probability space 
$(\Omega,\; \mathcal F,\;\{\mathcal F_t\}_{t\geq0},\;\mathbb{P}),\;t\geq0,$
define the equivalent measure  $\mathbb{P}_*$
by the Radon-Nikodym derivative
\begin{equation}
\label{eq:RN}
\begin{aligned}
\frac{\mathrm{d}\mathbb{P}_*}{\mathrm{d}\mathbb{P}}|_{t}
=Z(t)=&\mathrm{e}^{\mathbb L_*(t)}
\prod\limits_{m=1}^{M(t)}\left[
\left(1+R^{(\varepsilon_{m-1})}_*(N_{m-1}(T_m))\right)z_{*,m-1}(T_m)\right]
\cdot z_{*,M(t)}\left(t-T^{(+,M(t))}\right),
\quad t>0,
\end{aligned}\end{equation}
where $z_{*,m}$ are defined as \eqref{def:zm} (with $r_*^{(\varepsilon_m)}(n)$
instead of $r_m(n)$).

This means that for any $A\in\mathcal F_t$, 
where $\mathcal F_t$ is the natural filtration determined by $X$,
 we have 
 \begin{equation}
\label{def:Q}
\mathbb{P}_*(A)=\mathbb{E}\left[
Z(t)\mathbf{1}_{A}
\right],\quad t>0.
\end{equation}
It is easy to see that under measure $\mathbb{P}_*$ defined by
\eqref{eq:RN}, 
\emph{only} (unobservable) \emph{intensity parameters} $\vec\lambda^{(i)}$ and $\vec\mu^{(i)}$
of process $X$ are changed. In this circumstances \eqref{eq:RN}-\eqref{def:Q}
 may be treated as the \emph{Esscher transform}  $\mathbb{P}_*\sim\mathbb{P}$ (see  \cite{Elliott}).

The following result serves as a version of Cameron-Martin-Girsanov 
Theorem for this measure transformation.

\begin{theo}\label{theo:Esscher}
Under  measure $\mathbb{P}_*,$ which is defined by \eqref{eq:RN}-\eqref{def:Q}\textup{,}
the underlying state process $\varepsilon$
is governed by independent ${PoExp}(\vec\lambda_*, \vec\mu_*)$-distributed 
inter-switching times\textup{,}
with the alternating parameters 
\begin{equation}
\label{eq:la*}
\lambda_{*n}^{(i)}=\lambda^{(i)}_n(1+r^{(i)}_*(n))
\end{equation}
 and 
\begin{equation}
\label{eq:mu*}
\mu^{(i)}_{*n}
=\mu_n^{(i)}(1+R_*^{(i)}(n)),\;i\in\{0, 1\}.
\end{equation}
\end{theo}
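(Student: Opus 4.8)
The plan is to establish the claim by an explicit cycle-by-cycle change-of-measure computation, exploiting the multiplicative structure of the density $Z(t)=\mathcal E_t(\mathbb L_*+\mathbb J_*)$. First I would record that, by \eqref{eq:girsanov} and Theorem \ref{theo:martingale}, the process $\mathbb L_*+\mathbb J_*$ is a $\mathbb P$-martingale, so $Z$ is a nonnegative martingale with $\mathbb E\, Z(t)=1$; together with the non-explosion ensured by \eqref{convergence:Deltak} (hence $M(t)<\infty$ a.s.) this makes $\mathbb P_*$ a genuine probability measure and the transform \eqref{eq:RN}--\eqref{def:Q} well defined. Since $\mathbb L_*$ is absolutely continuous and the only jumps of $\mathbb L_*+\mathbb J_*$ are the $r_*$-jumps at the arrivals of the $N_m$ and the $R_*$-jumps at the switching instants $T^{(+,m)}$, the stochastic exponential factorizes exactly as written in \eqref{eq:RN}: over each completed cycle $[T^{(+,m-1)},T^{(+,m)})$ it carries the factor $\mathrm e^{l_{*,m-1}(T_m)}\big(1+R_*^{(\varepsilon_{m-1})}(N_{m-1}(T_m))\big)z_{*,m-1}(T_m)$, and over the current incomplete cycle the factor $\mathrm e^{l_{*,M(t)}(t-T^{(+,M(t))})}z_{*,M(t)}\big(t-T^{(+,M(t))}\big)$.

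The core is then a single-cycle computation. Conditionally on a cycle starting in state $i$, the joint law of the interarrival times $\tau_0,\ldots,\tau_{n-1}$ of $N$, the count $n=N(T)$ at the switch, and the holding time $T\in\mathrm du$ is, by the density displayed in the proof of Proposition \ref{prop-densities}, equal to $\big[\prod_{k<n}\lambda_k^{(i)}\mathrm e^{-\lambda_k^{(i)}s_k}\big]\,\mathrm e^{-\lambda_n^{(i)}(u-s^{(+,n)})}\cdot\mu_n^{(i)}\mathrm e^{-\sum_{k<n}\mu_k^{(i)}s_k-\mu_n^{(i)}(u-s^{(+,n)})}$ on $\Xi_n(u)$. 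The cycle factor of $Z$ on this event has exponential part $\exp\big(\sum_{k<n}c_*^{(i)}(k)s_k+c_*^{(i)}(n)(u-s^{(+,n)})\big)$ with $c_*^{(i)}(k)=-\lambda_k^{(i)}r_*^{(i)}(k)-\mu_k^{(i)}R_*^{(i)}(k)$ by \eqref{eq:girsanov}, and jump part $\prod_{k<n}(1+r_*^{(i)}(k))\cdot(1+R_*^{(i)}(n))$.

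Multiplying the two, the contribution of each interarrival $s_k$, $k<n$, collapses to $\lambda_k^{(i)}(1+r_*^{(i)}(k))\,\mathrm e^{-\lambda_k^{(i)}(1+r_*^{(i)}(k))s_k}\cdot\mathrm e^{-\mu_k^{(i)}(1+R_*^{(i)}(k))s_k}$, and the terminal state-$n$ contribution collapses to $\mu_n^{(i)}(1+R_*^{(i)}(n))\,\mathrm e^{-\lambda_n^{(i)}(1+r_*^{(i)}(n))(u-s^{(+,n)})}\,\mathrm e^{-\mu_n^{(i)}(1+R_*^{(i)}(n))(u-s^{(+,n)})}$; the drift correction \eqref{eq:girsanov} is exactly what makes the exponents recombine this way. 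Reading off the result, this is precisely the joint density of one $\mathrm{PoExp}(\vec\lambda_*^{(i)},\vec\mu_*^{(i)})$ cycle in state $i$ with $\lambda_{*n}^{(i)}=\lambda_n^{(i)}(1+r_*^{(i)}(n))$ and $\mu_{*n}^{(i)}=\mu_n^{(i)}(1+R_*^{(i)}(n))$, i.e. \eqref{eq:la*}--\eqref{eq:mu*}. For the incomplete last cycle the same bookkeeping applies, except its $Z$-factor lacks the $(1+R_*(n))$ term, which matches the ``survival past $t$'' part of the $\mathrm{PoExp}$ law \eqref{1} rather than the ``switch at $T$'' part \eqref{2}.

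Finally, since $Z(t)$ is a product of one such factor per cycle and the original cycles are independent with alternating laws, the reweighting acts independently on each cycle; running this through the tower property over $M(t)$, with the interchange of sum and expectation justified by the non-explosion built into \eqref{convergence:Deltak}, shows that under $\mathbb P_*$ the cycles remain independent with alternating $\mathrm{PoExp}(\vec\lambda_*^{(\varepsilon_m)},\vec\mu_*^{(\varepsilon_m)})$ holding times, which is the assertion. The main obstacle I anticipate is precisely this last assembly step: carefully handling the random number $M(t)$ of factors and verifying that the partial cycle up to time $t$ is reweighted consistently with the completed ones, so that the per-cycle identities can be summed without convergence issues — this is where condition \eqref{convergence:Deltak} does the work.
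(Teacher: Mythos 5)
Your proposal is correct and follows essentially the same route as the paper: both multiply the relevant indicator by the density $Z(t)$, use the drift condition \eqref{eq:girsanov} to recombine the exponents so that the tilt is absorbed into the new intensities $\lambda_{*n}^{(i)}=\lambda_n^{(i)}(1+r_*^{(i)}(n))$ and $\mu_{*n}^{(i)}=\mu_n^{(i)}(1+R_*^{(i)}(n))$, and then identify the result with the $\mathrm{PoExp}$ law via Proposition \ref{prop-densities}. The only difference is cosmetic: you work with the unintegrated joint density of the interarrival times within a cycle, while the paper integrates first and compares survivor functions, then extends to the joint law of successive switching times exactly as in your assembly step.
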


Notice that under   measure $\mathbb{P}_*$ the unobservable 
intensity parameters $\vec\lambda^{(i)}$ and $\vec\mu^{(i)}$ of $X$
are transformed 
in agreement with the traditional results, related to simple jump-telegraph process,
 \cite{KR}, see also \cite{bardhan1}.

\begin{proof} 
The distribution of the first switching time  $T$ under measure $\mathbb{P}_*$ 
can be determined by the survivor functions 
$\mathbb{P}_*\{T>t,\; N(t)=n~|~\varepsilon(0)=i\},\;n\geq0, i\in\{0, 1\}$.
By definition \eqref{eq:RN}-\eqref{def:Q} 
\begin{equation*}
\begin{aligned}
\mathbb{P}_*\{T>t,\; &N(t)=n~|~\varepsilon(0)=i\}\\
=&\mathbb{E}\left(
\mathrm{e}^{l_*^{(i)}(t)}\prod_{k=1}^{N(t)}\left(1+r_*^{(i)}(k)\right)
\times\mathrm{e}^{-\xi^{(i)}(t)}\mathbf{1}_{\{N(t)=n\}}~|~\varepsilon(0)=i
\right)\\
=&\prod_{k=1}^{n}\left(1+r_*^{(i)}(k)\right)
\mathbb{E}\left(
\exp\left(
-\int_0^t\left(\mu^{(i)}_{N(s)}-c_*^{(i)}(N(s))\right)\mathrm{d} s
\right)\cdot\mathbf{1}_{\{N(t)=n\}}
\right).
\end{aligned}\end{equation*}
By \eqref{eq:psin1} and \eqref{eq:T>t-n}
 it follows that 
\begin{equation}\label{eq:FT*2}
\mathbb{P}_*\{T>t,\; N(t)=n~|~\varepsilon(0)=i\}
=\prod_{k=1}^{n}\left(1+r_*^{(i)}(k)\right)
\Lambda_na_n(t; \overrightarrow{\lambda^{(i)}+\mu^{(i)}-c_*^{(i)}}).
\end{equation}
Here $\overrightarrow{\lambda^{(i)}+\mu^{(i)}-c_*^{(i)}}:=\{\lambda_n^{(i)}+\mu_n^{(i)}-c_*^{(i)}(n)\}_{n\geq0}.$
Due to \eqref{eq:girsanov} 
\[
\lambda_n^{(i)}+\mu_n^{(i)}-c_*^{(i)}(n)
=\lambda_n^{(i)}\left(1+r_*^{(i)}(n)\right)+\mu_n^{(i)}\left(1+R_*^{(i)}(n)\right).
\]
Now, comparing \eqref{eq:FT*2} with \eqref{1}  we found that under measure $\mathbb{P}^*$
the first switching time $T$ has Poisson-modulated exponential distribution,
$T\sim\mathrm{PoExp}(\vec\lambda_*, \vec\mu_*),$  with parameters $\vec\lambda_*, \vec\mu_*$, which
are defined by   \eqref{eq:la*} and \eqref{eq:mu*}.

The  joint distribution of the  switching times, $T_1,\; T^{(+,2)}, \ldots, T^{(+,m)},$ 
could be represented similarly.
For instance, the joint distribution of $T_1$ and $T_1+T_2$ under measure $\mathbb{P}$ 
by independence of $T_1$ and $ T_2$ is given by 
\[
\mathbb{P}\left\{T_1\in\mathrm{d} s,\;T_1+T_2>t~|~\varepsilon(0)=i\right\}
=\mathbb{P}\left\{T_1\in\mathrm{d} s~|~\varepsilon(0)=i\right\}\cdot\mathbb{P}\left\{T_2>t-s~|~\varepsilon(0)=1-i \right\}.
\]
By Proposition \ref{prop-densities} one can obtain
\begin{equation}
\label{eq:P1}
\begin{aligned}
&\mathbb{P}\left\{T_1\in\mathrm{d} s,\;T_1+T_2>t~|~\varepsilon(0)=i\right\}
\\
=&\sum_{n=0}^\infty
\mu_n^{(i)}\Lambda_n^{(i)}a_n(s; \overrightarrow{\lambda^{(i)}+\mu^{(i)}})     
\cdot
\sum_{n=0}^\infty\Lambda_n^{(1-i)}a_n(t-s; \overrightarrow{\lambda^{(1-i)}+\mu^{(1-i)}})\mathrm{d} s,
\end{aligned}\end{equation}
see \eqref{1}-\eqref{2}.

On the other hand, under measure 
$\mathbb{P}_*$
 by definition \eqref{eq:RN}-\eqref{def:Q} we have for $s<t$
\begin{equation}
\label{eq:Q1}\begin{aligned}
&\mathbb{P}_*\left(
T_1\in\mathrm{d} s,\;T_1+T_2>t~|~\varepsilon(0)=i
\right)
=\mathbb{E}\left[
Z(t)\mathbf{1}_{\{T_1\in\mathrm{d} s,\;T_1+T_2>t\}}~|~\varepsilon(0)=i
\right]\\
=\mathbb{E}\Big[&\mathrm{e}^{l_*^{(i)}(s)+l_*^{(1-i)}(t-s)}\left(
1+R_*^{(i)}(N_0(s))\right)\\
&\times\prod_{k=0}^{N_0(s)}(1+r_*^{(i)}(k))\prod_{k=0}^{N_1(t-s)}(1+r_*^{(1-i)}(k))
\mathbf{1}_{\{T_1\in\mathrm{d} s,\;T_1+T_2>t\}}~|~\varepsilon(0)=i\Big].
\end{aligned}\end{equation}
Here
$l_*^{(i)}(s):=\int_0^sc_*^{(i)}(N_0^{(i)}(u))\mathrm{d} u$ and 
$l_*^{(1-i)}(t-s):=\int_0^{t-s}c_*^{(1-i)}(N_1(u))\mathrm{d} u$ are independent.

By applying again Proposition \ref{prop-densities}  we
confirm \eqref{eq:la*}-\eqref{eq:mu*}.
\end{proof}

By the fundamental theorem of market modelling
the model is arbitrage-free when 
the discounted stock price $\widetilde S(t):=B(t)^{-1}S(t)$
is a martingale under suitable equivalent measure $\mathbb{P}_*$ 
(see   \cite{HK,HP1,HP2} and  \cite{DS}).

Note that the discounted stock price $\widetilde S(t)$
is of the same form as $S(t)$ (with the alternating trends 
$\tilde c^{(i)}=c^{(i)}-y^{(i)},\;i\in\{0, 1\}).$
Hence, without loss of generality one can assume that the \emph{ interest rates are zeros},
$y^{(i)}=0,\;i\in\{0, 1\}.$ 

Meanwhile, Theorem \ref{theo:Esscher} permits to change the intensity parameters arbitrarily. 
Hence one can reach martingale condition \eqref{eq:martingale-condition}
by applying the Esscher transform \eqref{eq:RN}
only, if  values
$c(n),$ 
$\overline{r(n)}$ and $\overline{R(n)}$ are not of the same sign
(for all states and for all $n$).

If conditions \eqref{support-} or \eqref{support+}  hold, then
 model \eqref{def:stock}  has arbitrage opportunities. 
%%%%%%%%%%%%%%%%%%%%%%%%%%%%%%%%%%%
%%%%%%%%%%%%%%%%%%%%%%%%%%%%%%%%%5%
%%%%%%%%%%%%%%%%%%%%%%%%%%%%%%%%%%%
%%%%%%%%%%%%%%%%%%%%%%%%%%%%%%%%%%
%%%%%%%%%%%%%%%%%%%%%%%%%%%%%%%%%%%%

\begin{rem}
Market model  \eqref{def:stock} might be interpreted as follows. 

Assume that the interest rates are zeros.
Let 
\begin{equation}
\label{def:S-ess}
S_0\mathcal E_t(\mathbb L+J)
=S_0\mathrm{e}^{\mathbb L(t)}
\prod_{m=1}^{M(t)}
\left(1+R_{m-1}\left(N_{m-1}(T_m)\right)\right),\qquad t\geq0,
\end{equation}
be the essential component of asset price \eqref{def:stock}\textup{,} 
which is determined by inherent market forces.
This component takes into account only the jumps accompanying the pattern's switchings.

We assume that in the both states the tendency $c(n)$ and the accompanying 
jump amplitude $\overline{R(n)}$ are of the opposite signs\textup{:}
\begin{equation}\label{eq<0}
c(n)/\overline{R(n)}<0,\qquad\forall n.
\end{equation}
This assumption seems natural, since the market model with the stock price defined by
\eqref{def:S-ess} is arbitrage-free if and only if this condition holds, 
see  \textup{\cite{Quant,KR}.}

The jumps defined by $z_m,$ \eqref{def:zm}\textup{,} 
accompanying each tendency fluctuation inside the current pattern\textup{,}
can be considered  as the result of external interventions of small markets players. 
The stock price follows 
\begin{equation*}
S(t)=S_0\mathcal E_t(\mathbb L+J)\times \left[\prod_{m=1}^{M(t)}z_{m-1}(T_m)\cdot 
z_{M(t)}\left(t-T^{(+, M(t))}\right)\right],
\qquad t\geq0.
\end{equation*}

If condition \eqref{eq<0} is fulfilled, then 
the model is still arbitrage-free\textup{,} in spite of the efforts of small players.
The equivalent martingale measure can be provided 
by the Esscher transform \eqref{eq:RN}
with parameters $c_*(n),\;R_*(n)$ and $r_*(n)$ satisfying 
\eqref{eq:girsanov} in both the states. 
To define this transform we first assume that
under the martingale measure the modulation intensity $\lambda_{*n}$
can be determined 
 neglecting the  influence of external interventions\textup{,}
such that 
$c(n)+\lambda_{*n}\overline{r(n)}$ is of the same sign 
with $c(n)$\textup{:}
\[
\lambda_{*n}<\left|c(n)/\overline{r(n)}\right|.
\]
By \eqref{eq:la*} $\lambda_{*n}=\lambda_n(1+r_*(n))$. Hence\textup{,} the parameters $r_*(n)$
satisfy the condition
\[
-1<r_*(n)<-1+\lambda_n^{-1}\left|c(n)/\overline{r(n)}\right|.
\]

Then\textup{,} the Radon-Nikodym derivative \eqref{eq:RN}  provides the martingale measure $\mathbb{P}_*,$ if
\textup{(}see \eqref{eq:martingale-condition}\textup{)}
\[
c(n)+\lambda_{*n}\overline{r(n)}+\mu_{*n}\overline{R(n)}=0,
\]
which by \eqref{eq:la*}-\eqref{eq:mu*}
gives
\[
R_*(n)=-1-\frac{c(n)+\lambda_n(1+r_*(n))\overline{r(n)}}{\mu_n\overline{R(n)}},\qquad R_*(n)>-1.
\]
\end{rem}

\begin{ex}
Let us consider the model with deterministic jump values $r^{(i)}(n)$ and $R^{(i)}(n)$.
The model is arbitrage-free\textup{,} if 
for each $n,\;n=0, 1, 2, \ldots$and $i,\;i\in\{0, 1\},$ 
the triplets 
\[
c^{(i)}(n),\quad r^{(i)}(n)\quad \text{and} \quad R^{(i)}(n)
\]
are not of the same sign.
In this case  martingale measures $\mathbb{P}_*$ are defined by 
 transform  \eqref{eq:RN} with parameters $c_*(n),\; r_*(n)$ and $R_*(n)$ satisfying \eqref{eq:girsanov}  and in  both the states
\[
c(n)+\lambda_n(1+r_*(n))r(n)+\mu_n(1+R_*(n))R(n)\equiv0.
\]

Let the Poisson modulation in the model be unobservable. This can occur in the case 
of   constant trends and jumps $R$\textup{:} in  both the states
\[
c(n)\equiv c,\qquad R(n)\equiv R,
\]
with modulated hidden parameters  $\lambda(n)$ and $\mu(n)$. We assume also that
the parameters' modulation is not accompanied with jumps\textup{:} $r(n)\equiv0.$

In this case the model is  arbitrage-free\textup{,} if $c/R<0.$ 
The   martingale measures are defined by \eqref{eq:girsanov} 
with arbitrary $r_*(n)>-1$ and unique $R_*(n),$
defined by
\[
R_*(n)=-1-\frac{c}{R\mu_n},\qquad n\geq0.
\]
\end{ex}

\section{Concluding remarks}

In this paper, we 
introduce a rather new class of double stochastic piecewise linear processes 
with a Poisson modulated exponential distributions of persistent epochs. 

The first layer of stochasticity is a usual telegraph process 
based on an alternating Poisson process $N(t)$, 
and the second one (driving the change of patterns) 
is characterised by exponentially distributed holding times with a $N(t)$-modulated parameter.

This class of processes is exploited for the purposes of financial modelling.
In particular, we study 
an incomplete financial market model 
based on a jump-telegraph process. 
The dynamics of the considered stochastic process is characterised 
by two alternating types of tendencies, 
whose holding times have Poisson-modulated exponential distribution. 
Moreover, the model includes two different kinds of jumps, 
one occurring at the tendency switchings 
and the other at the changes of patterns. 
A relevant aspect of this model is the presence of external shocks, 
which affect the rates of the trend switchings. 
This feature ensures that the model is largely flexible 
and thus it is suitable to describe a wide family of financial market scenarios. 
Specifically, it can be used to describe a financial market model 
whose log-returns are influenced both by market forces and by efforts of speculators. 
The main results provided for the piecewise linear process with jumps 
include the determination of integral equations 
for the conditional means and the martingale conditions. 
The applications to the financial market model have been presented in detail, 
including conditions leading to an arbitrage-free model.

\subsection*{\bf Acknowledgements}
The authors thank two anonymous referees for their 
useful comments that improved the paper.
This research is partially supported by the group GNCS of INdAM, and by   
MIUR (PRIN 2017, project ``Stochastic Models for Complex Systems''). 

There are no conflicts of interest to this work.


\begin{thebibliography}{99}
%
\bibitem{Taylor}
Taylor  GI. Diffusion by continuous movements. 
\emph{Proc Lond Math Soc.} 1922;     s2-20: 196--212.
 https://doi.org/10.1112/plms/s2-20.1.196
%
\bibitem{Gold}
Goldstein G.
On diffusion by discontinuous movements and on the telegraph equation.
\emph{Quart J Mech Appl Math.} 1951; 4:129--156.
%
 https://doi.org/10.1093/qjmam/4.2.129
%
\bibitem{Kac}
Kac M.
A stochastic model related to the telegraphers equation.
 \emph{Rocky Mountain J Math.}   1974; 4:497--509.
% 
 https://doi.org/10.1216/RMJ-1974-4-3-497
 
Reprinted from: M. Kac, 
Some stochastic problems in physics and mathematics, 
Colloquium lectures in the pure  and applied sciences, 
No. 2, hectographed, Field Research Laboratory, Socony Mobil Oil Company, Dallas, TX,
1956, pp. 102-122.
%
\bibitem{KR}
Kolesnik AD,  Ratanov N.
\emph{Telegraph Processes and Option Pricing.  }
Springer: Heidelberg, 2013.
%
https://doi.org/10.1007/978-3-642-40526-6
%
\bibitem{DiC-Barbara-Zacks}
Di Crescenzo A,   Martinucci B.
A damped telegraph random process
with logistic stationary distribution. \emph{J  Appl  Prob.}  2010; 47:84--96.
%
https://doi.org/10.1017/S0021900200006410
%
\bibitem{DiCR}
Di Crescenzo A, Ratanov N.  On jump-diffusion processes with regime
switching: martingale approach. 
\emph{ALEA, Lat Am J Probab Math Stat.}  2015;  12(2): 573--596. 
%
\bibitem{DiC-Zacks}
Di Crescenzo A, Zacks S.
Probability law and flow function of Brownian 
motion driven by a generalized telegraph process.
\emph{Methodol  Comput  Appl Probab.} 2015; 17(3):761--780. 
%
 https://doi.org/10.1007/s11009-013-9392-1
%
\bibitem{damped}
Ratanov N. Damped jump-telegraph processes. 
\emph{Stat   Probab Lett.} 2013;  83:2282--2290.
%
https://doi.org/10.1016/j.spl.2013.06.018
%
\bibitem{Weibull-logistic}
Ratanov N.
Telegraph processes with random jumps and complete
market models.
\emph{Methodol Comput Appl Probab.} 2015; 17(3): 677--695.
%
 https://doi.org/10.1007/s11009-013-9388-x
%
\bibitem{STAPRO90}
Ratanov N. 
 On piecewise linear processes.
 \emph{Stat Probab Lett. } 2014; 90:60--67.
 %
 https://doi.org/10.1016/j.spl.2014.03.015
 %
\bibitem{ALEA17}
Ratanov N. 
Self-exciting piecewise linear processes.  
\emph{ALEA, Lat Am J Probab Math Stat.}  2017; 14:445-471.
%
\bibitem{jacobsen}
Jacobsen M. \emph{Point Process Theory and Applications.} 
Marked Point and Piecewise Deterministic Processes. Birkhäuser, Boston, Basel, Berlin,  2006.
%
\bibitem{snyder}
Snyder DL, Miller MI.
\emph{Random Point Processes in Time and Space}.
Second Edition. Springer, 1991.
%
\bibitem{Bremaud}
Br\'emaud P.
\emph{Point Processes and Queues. Martingale dynamics}. Springer, 1981.
%
\bibitem{Grandell}
Cox DR.  
 Some statistical methods connected with series of events. 
 \emph{Journal of Royal Statistical Society B.}  1955;  17:129--164.
 %
 https://doi.org/10.2307/2983950. 
%
\bibitem{Daley1}
Daley DJ,  Vere-Jones D.
\emph{An Introduction to the
Theory of Point Processes},
\emph{Volume I\textup{:} Elementary Theory and Methods} 
Springer, 2d edition, 2003.
%
\bibitem{grandel}
Grandell J.  \emph{Mixed Poisson Processes}. Chapman \& Hall, London, 1997.
 https://doi.org/10.1007/978-1-4899-3117-7
%
\bibitem{schmidt}
Rolski T, Schmidli H, Schmidt V, Teugels J.
\emph{Stochastic Processes for
Insurance and Finance}. John Wiley  \&  Sons 1999.
 https://doi.org/10.1002/ 9780470317044
%
\bibitem{STAPRO118}
De Gregorio A.
Transport processes with random jump rate.
\emph{Stat   Probab Lett.}  2016; 118:127--134.
%
http://dx.doi.org/10.1016/j.spl.2016.06.022
%
\bibitem{STAPRO129}
De Gregorio A.
A note on isotropic random flights moving in mixed Poisson environments.
\emph{Stat   Probab Lett.}     2017;  129:311--317.
%
http://dx.doi.org/10.1016/j.spl.2017.06.021
%
\bibitem{RobertoEnzo}
Garra R,  Orsingher E,  Ratanov N.
Planar piecewise linear random motions with jumps
\emph{Math Meth Appl Sci.} 2017; 40:7673--7685.
%
http://dx.doi.org/10.1002/mma.4552
%
\bibitem{STAPRO131}
Ratanov N.
Piecewise linear process with renewal starting points.
 \emph{Stat Probab Lett. }   2017; 131:78--86.
 %
 https://doi.org/10.1016/j.spl.2017.08.010
%
\bibitem{Quant}
Ratanov N.
A jump telegraph model for option pricing. \emph{Quant  Fin.} 
2007;  7:575--583.
%
https://doi.org/10.1080/14697680600991226
%
\bibitem{Kuznetsov}
Kuznetsov YI
\emph{Matrices and Polynomials. Part II. Special Theory. }
(Inst. Comp. Math. and Math. Geoph. Publ., Novosibirsk, 2004), in Russian.
%
\bibitem{GR}
Gradshteyn IS, Ryzhik IM.
 \emph{Table of Integrals, Series and Products}
Academic Press, Boston, 1994.
%
\bibitem{Cox-renewal}
Cox DR.
\emph{Renewal Theory}, John Wiley \& Sons: New York, 1962.
%
\bibitem{LR}
 L\'opez O,   Ratanov N. 
Option pricing driven by a telegraph process with random jumps.
\emph{J  Appl  Prob.} 2012;  49(3):838--849.
%
https://doi.org/10.1017/S0021900200009578
%
\bibitem{bardhan2}
Bardhan I, Chao X.
On martingale measures when asset returns have
unpredictable jumps.
\emph{Stoch Proc Appl.} 1996;  63:35--54.

%
\bibitem{Elliott}
Elliott  RJ,   Chan L,  Siu   TK.
Option pricing and Esscher transform under regime switching. 
\emph{Ann  Finance}  2005;  1:423--432.
%
https://doi.org/10.1007/s10436-005-0013-z
%
\bibitem{bardhan1}
Bardhan I, Chao X.
 Martingale analysis for assets with discontinuous returns, 
\emph{Math Oper  Res.}    1995; 20:243--256.
%
\bibitem{HK}
Harrison JM,  Kreps DM. 
Martingales
and arbitrage in multiperiod securities markets.
 \emph{Journal  of Economic Theory} 1979; 20:381--408. 
%
\bibitem{HP1}
Harrison JM,  Pliska SR.
 Martingales and stochastic integrals in the theory of continuous trading. 
 \emph{Stoch Proc   Appl.}  1981; 11:215--280. 
%
\bibitem{HP2}
Harrison JM,  Pliska SR.
A stochastic calculus model of continuous trading: complete markets.
 \emph{Stoch Proc  Appl.}  1983;  15:313--316. 

%
\bibitem{DS}
Delbaen F,  Schachermayer W.   What is a free lunch? 
\emph{Notices of the American Mathematical Society} 2004; 51(5):526--528. 

\end{thebibliography}
\end{document}